\documentclass[11pt]{amsart}
\usepackage{amssymb,latexsym}
\usepackage{pdfsync}
\usepackage{color}
\usepackage[colorlinks,linkcolor=blue,citecolor=red]{hyperref}

\newdimen\AAdi%
\newbox\AAbo%
%
\def\AAk#1#2{\s_etbox\AAbo=\hbox{#2}\AAdi=\wd\AAbo\kern#1\AAdi{}}%
\def\AAr#1#2#3{\s_etbox\AAbo=\hbox{#2}\AAdi=\ht\AAbo\raise#1\AAdi\hbox{#3}}%
\font\tenmsb=msbm10 at 12pt
\font\sevenmsb=msbm7 at 8pt
\font\fivemsb=msbm5 at 6pt
\newfam\msbfam
\textfont\msbfam=\tenmsb
\scriptfont\msbfam=\sevenmsb
\scriptscriptfont\msbfam=\fivemsb
\def\Bbb#1{{\tenmsb\fam\msbfam#1}}

\newcommand{\beq}{\begin{equation}}
\newcommand{\eeq}{\end{equation}}
\newcommand{\beqr}{\begin{eqnarray}}
\newcommand{\eeqr}{\end{eqnarray}}
\newcommand{\ba}{\begin{array}}
\newcommand{\ea}{\end{array}}

\textwidth 15.80cm \textheight 21.6cm \topmargin 0.2cm
\oddsidemargin 0.1cm \evensidemargin 0.1cm
\parskip 0.0cm

\begin{document}

\newtheorem{thm}{Theorem}
\newtheorem{lem}{Lemma}
\newtheorem{cor}{Corollary}
\newtheorem{rem}{Remark}
\newtheorem{pro}{Proposition}
\newtheorem{defi}{Definition}
\newtheorem{eg}{Example}
\newtheorem*{claim}{Claim}
\newtheorem{conj}[thm]{Conjecture}
\newcommand{\noi}{\noindent}
\newcommand{\dis}{\displaystyle}
\newcommand{\mint}{-\!\!\!\!\!\!\int}
\numberwithin{equation}{section}

\def \bx{\hspace{2.5mm}\rule{2.5mm}{2.5mm}}
\def \vs{\vspace*{0.2cm}}
\def\hs{\hspace*{0.6cm}}
\def \ds{\displaystyle}
\def \p{\partial}
\def \O{\Omega}
\def \o{\omega}
\def \b{\beta}
\def \m{\mu}
\def \l{\lambda}
\def\L{\Lambda}
\def \ul{u_\lambda}
\def \D{\Delta}
\def \d{\delta}
\def \k{\kappa}
\def \s{\sigma}
\def \e{\varepsilon}
\def \a{\alpha}
\def \tf{\tilde{f}}
\def\cqfd{%
\mbox{ }%
\nolinebreak%
\hfill%
\rule{2mm} {2mm}%
\medbreak%
\par%
}
\def \pr {\noindent {\it Proof.} }
\def \rmk {\noindent {\it Remark} }
\def \esp {\hspace{4mm}}
\def \dsp {\hspace{2mm}}
\def \ssp {\hspace{1mm}}

\def\la{\langle}\def\ra{\rangle}

\def \u{u_+^{p^*}}
\def \ui{(u_+)^{p^*+1}}
\def \ul{(u^k)_+^{p^*}}
\def \energy{\int_{\R^n}\u }
\def \sk{\s_k}
\def \mo{\mu_k}
\def\cal{\mathcal}
\def \I{{\cal I}}
\def \J{{\cal J}}
\def \K{{\cal K}}
\def \OM{\overline{M}}

\def\n{\nabla}

\def\fk{{{\cal F}}_k}
\def\M1{{{\cal M}}_1}
\def\Fk{{\cal F}_k}
\def\Fl{{\cal F}_l}
\def\FF{\cal F}
\def\Gk{{\Gamma_k^+}}
\def\n{\nabla}
\def\uuu{{\n ^2 u+du\otimes du-\frac {|\n u|^2} 2 g_0+S_{g_0}}}
\def\uuug{{\n ^2 u+du\otimes du-\frac {|\n u|^2} 2 g+S_{g}}}
\def\sku{\sk\left(\uuu\right)}
\def\qed{\cqfd}
\def\vvv{{\frac{\n ^2 v} v -\frac {|\n v|^2} {2v^2} g_0+S_{g_0}}}
\def\vvs{{\frac{\n ^2 \tilde v} {\tilde v}
 -\frac {|\n \tilde v|^2} {2\tilde v^2} g_{S^n}+S_{g_{S^n}}}}
\def\skv{\sk\left(\vvv\right)}
\def\tr{\hbox{tr}}
\def\pO{\partial \Omega}
\def\dist{\hbox{dist}}
\def\RR{\Bbb R}\def\R{\Bbb R}
\def\C{\Bbb C}
\def\B{\Bbb B}
\def\N{\Bbb N}
\def\Q{\Bbb Q}
\def\Z{\Bbb Z}
\def\PP{\Bbb P}
\def\EE{\Bbb E}
\def\F{\Bbb F}
\def\G{\Bbb G}
\def\H{\Bbb H}
\def\SS{\Bbb S}\def\S{\Bbb S}

\def\div{\hbox{div}\,}

\def\lcf{{locally conformally flat} }

\def\circledwedge{\setbox0=\hbox{$\bigcirc$}\relax \mathbin {\hbox
to0pt{\raise.5pt\hbox to\wd0{\hfil $\wedge$\hfil}\hss}\box0 }}

\def\sss{\frac{\s_2}{\s_1}}

\date{\today}
\title[ Shrinking Sasaki-Ricci Solitons ]{ Gradient Shrinking Sasaki-Ricci
Solitons with Harmonic Weyl Tensor}
\author{}
\author[Chang]{Shu-Cheng Chang}
\address{ Department of Mathematics, National Taiwan University,
Taipei 10617, Taiwan and Shanghai Institute of Mathematics and
Interdisciplinary Sciences, Shanghai, 200433, China }
\email{scchang@math.ntu.edu.tw}
\author[Qiu]{Hongbing Qiu}
\address{School of Mathematics and Statistics\\
Wuhan University\\
Wuhan 430072, China }
\email{hbqiu@whu.edu.cn}



\begin{abstract}  We establish integral curvature estimates for complete gradient shrinking Sasaki-Ricci solitons. As an application, we show that any such soliton with harmonic Weyl tensor must be a finite quotient of a sphere. This result can be regarded as the Sasaki analogue of the work of Munteanu and Sesum \cite{MS13} on Ricci solitons.

\vskip12pt

\noindent{\it Keywords and phrases}: Gradient Sasaki-Ricci soliton,  curvature estimates, harmonic Weyl tensor

\noindent {\it MSC 2020}:  53C25

\end{abstract}
\maketitle
\section{Introduction}




\vskip12pt

Sasakian geometry is as the odd-dimensional analogue of K\"{a}hler geometry.
A Riemannian $(2n+1)$-manifold $M$ is Sasaki if the cone $C(M)$ over $M$  is
K\"{a}hler cone. Furthermore, it is Sasaki-Einstein if its K\"{a}hler cone $
C(M)$ is a Calabi--Yau $(n+1)$-fold. In particular, Sasaki-Einstein $5$-manifolds provide interesting examples of the AdS/CFT correspondence. In
K\"ahler geometry, there is a well-known classification of compact Fano K\"{a}hler-Einstein smooth surfaces due to Tian--Yau and then leads to a first
classification of all compact regular Sasaki-Einstein $5$-manifolds. On the
other hand, it was proved by Smale--Barden (\cite{Sm62, Ba65}) that the
class of simply connected, closed, oriented, smooth, $5$-manifolds is
classifiable under diffeomorphism. Then it is possble to work on existence
problems of Sasaki-Einstein metrics on Sasakian manifolds of dimension five.
\emph{\ }We refer to \cite{BG08} and references therein for other examples of
(quasi-regular and irregular) Sasaki-Einsteins.

It has been observed, beginning with the work of Cao \cite{Cao85}, that the Kähler-Ricci flow can be effective in finding Kähler-Einstein metrics.
From this point of view, Smoczyk--Wang--Zhang (\cite{SWZ10}) study the
Sasaki-Ricci flow 
\begin{equation*}
\frac{d}{dt}g^{T}(x,t)=-Ric^{T}(x,t)
\end{equation*}
on $M\times \lbrack 0,T)$ and proved an existence theorem of Sasaki $\eta$-Einstein metrics on a compact Sasakian $(2n+1)$-manifold when the basic
first Chern class is positive or null. In general, the Sasaki-Ricci flow
will develop singularities in a finite time. we refer to \cite{CJ15}, \cite{CHLW25}, \cite{CLLW24} and \cite{CCLW22} for subsequent developments along this
direction.

In the present paper, we try to study singularities models of the
Sasaki-Ricci flow and classify the Sasaki-Ricci soliton $(M,g^{T},\psi ,X)$
which arises as a special solution to the flow and can be viewed as a natural generalization of $
\eta$-Einstein metric. In particular, we obtain a classification of complete gradient
shrinking Sasaki-Ricci solitons with the harmonic Weyl tensor.

We call $(M,g^{T},\psi ,X)$ a gradient Sasaki-Ricci solitons (or a
transverse K\"{a}hler-Ricci soliton) if there exist a Hamiltonian basic function 
$\psi $ and a transverse K\"{a}hler metric $g^{T}=(g_{j\overline{k}})$
such that 
\begin{equation*}
R_{j\overline{k}}^{T}+\psi _{j\overline{k}}=(A+2)g_{j\overline{k}}.
\end{equation*}
The soliton is said to be expanding, steady and shrinking if for a constant $A<-2;$\ \ $
A=-2;$ \ $A>-2$, respectively. Up to $D$-homothetic, one can take $A=2n$ such that 
\begin{equation}
R_{j\overline{k}}^{T}+\psi _{j\overline{k}}=(2n+2)g_{j\overline{k}}
\label{1}
\end{equation}
is a gradient shrinking Sasaki-Ricci soliton. In the special case where $\psi$ is constant, this reduces to the transverse K\"ahler-Einstein condition. Further details are provided in Section 2.

We first recall the results of the classification of gradient shrinking
Ricci solitons with vanishing Weyl tensor. In particular, Ni--Wallach (\cite{NW08}), Petersen--Wylie (\cite{PW10}), Zhang (\cite{Zh09}) and Cao--Wang--Zhang (\cite{CWZ11}) proved various classification theorems for complete gradient shrinking Ricci
solitons with vanishing Weyl curvature tensor in arbitrary dimension, under certain
integral curvature estimates. Subsequently,  Munteanu--Sesum (\cite{MS13}) removed these integral curvature assumptions and obtained a full classification of
complete gradient shrinking Ricci solitons with harmonic Weyl tensor.

Now we state our main result. Firstly, we establish an integral bound of the Ricci curvature for any gradient shrinking Sasaki-Ricci soliton as follows:

\begin{thm}\label{thm-SGR1}

Let $(M^{2n+1}, g^T, \psi, X)$ be a complete gradient shrinking Sasaki-Ricci soliton with $\psi$ the corresponding real Hamiltonian basic function (potential) with respect to $X$, then we have
\begin{equation}\label{eqn-SR20}\aligned
\int_M | Ric |^2 e^{-\l \psi} < \infty, \quad \quad for \quad any \quad \l>0.
\endaligned
\end{equation}

\end{thm}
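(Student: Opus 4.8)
The plan is to transfer the argument of Munteanu--Sesum \cite{MS13} to the transverse K\"ahler geometry of the soliton. Throughout, $\psi$ and $R^T$ are basic functions, so the natural analytic tool is the drift transverse Laplacian $\Delta^T_\psi := \Delta^T - \langle\nabla\psi,\nabla\,\cdot\,\rangle$, for which the transverse divergence identity $e^{-\psi}\Delta^T_\psi u = \mathrm{div}^T(e^{-\psi}\nabla u)$ and the associated integration by parts hold, since all quantities involved are invariant along the Reeb flow. I would first record the soliton identities obtained from \eqref{1} by tracing and differentiating: the trace identity $R^T + \Delta^T\psi = \mathrm{const}$, the transverse Hamilton-type identity $R^T + |\nabla\psi|^2 = 2(2n+2)\psi + \mathrm{const}$, the contracted second Bianchi identity $\tfrac12\nabla R^T = Ric^T(\nabla\psi,\cdot)$ (so that $\langle\nabla\psi,\nabla R^T\rangle = 2\,Ric^T(\nabla\psi,\nabla\psi)$), and, combining these, the Bochner-type identity
\[
\Delta^T_\psi R^T = c_n R^T - 2|Ric^T|^2
\]
for a positive dimensional constant $c_n$. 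These are the transverse analogues of the classical shrinking Ricci soliton identities and are derived exactly as in the K\"ahler-Ricci soliton case.

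Next I would establish the Cao--Zhou-type growth estimates in this setting: the transverse scalar curvature is nonnegative (a transverse analogue of Chen's theorem), the potential $\psi$ is proper and bounded below, it grows quadratically in the transverse (Carnot--Carath\'eodory) distance $d$ to a fixed minimum point, $c_1 d^2 - c_2 \le \psi \le c_3 d^2 + c_4$, and the weighted volume $\mathrm{vol}(\{\psi\le t\})$ grows at most polynomially in $t$. Since $\psi$ is basic these reduce, after integrating out the Reeb direction, to the Ricci soliton arguments applied to the transverse structure. The consequences I need are: (a) $\int_M Q(\psi)\,e^{-\lambda\psi} < \infty$ for every polynomial $Q$ and every $\lambda > 0$; and (b) from $R^T\ge 0$ and the Hamilton-type identity, $0\le R^T \le 2(2n+2)\psi + C$ and $|\nabla\psi|^2 \le 2(2n+2)\psi + C$, whence by (a) each of $\int_M R^T e^{-\lambda\psi}$, $\int_M |\nabla\psi|^2 e^{-\lambda\psi}$, $\int_M |\nabla\psi|^4 e^{-\lambda\psi}$ is finite.

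The core is then a weighted integration-by-parts estimate. Fix $\lambda > 0$ and let $\phi_R$ be a cutoff equal to $1$ on the transverse ball of radius $R$, supported in that of radius $2R$, with $|\nabla\phi_R|^2 \le C\phi_R R^{-2}$. Multiplying the Bochner identity by $\phi_R e^{-\lambda\psi}$, integrating over $M$, writing $e^{-\lambda\psi} = e^{(1-\lambda)\psi}e^{-\psi}$ and integrating by parts with $\mathrm{div}^T$ gives
\[
2\int_M \phi_R |Ric^T|^2 e^{-\lambda\psi}
= c_n\!\int_M \phi_R R^T e^{-\lambda\psi}
+ \int_M \langle\nabla\phi_R,\nabla R^T\rangle e^{-\lambda\psi}
+ (1-\lambda)\!\int_M \phi_R \langle\nabla\psi,\nabla R^T\rangle e^{-\lambda\psi}.
\]
The first term is $\le c_n\int_M R^T e^{-\lambda\psi} < \infty$. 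In the third term, $\langle\nabla\psi,\nabla R^T\rangle = 2\,Ric^T(\nabla\psi,\nabla\psi)$, so it is $\le 2|1-\lambda|\big(\int_M \phi_R|Ric^T|^2 e^{-\lambda\psi}\big)^{1/2}\big(\int_M |\nabla\psi|^4 e^{-\lambda\psi}\big)^{1/2}$. In the second term, $|\nabla R^T| \le 2|Ric^T||\nabla\psi|$ and $|\nabla\phi_R|\le R^{-1}\sqrt{C\phi_R}$, so it is $\le 2R^{-1}\sqrt{C}\,\big(\int_M \phi_R|Ric^T|^2 e^{-\lambda\psi}\big)^{1/2}\big(\int_M |\nabla\psi|^2 e^{-\lambda\psi}\big)^{1/2}$, where in both terms $\sqrt{\phi_R}$ is kept together with the $|Ric^T|$-factor, so that only the quantity $\int_M\phi_R|Ric^T|^2 e^{-\lambda\psi}$, which is monotone in $R$, appears. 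By Young's inequality a fraction of this quantity is absorbed into the left-hand side, leaving
\[
\int_M \phi_R |Ric^T|^2 e^{-\lambda\psi} \le C\!\int_M R^T e^{-\lambda\psi} + C(1-\lambda)^2\!\int_M |\nabla\psi|^4 e^{-\lambda\psi} + O(R^{-2}),
\]
a bound independent of $R$. Letting $R\to\infty$ and using monotone convergence yields $\int_M |Ric^T|^2 e^{-\lambda\psi} < \infty$. Finally, on a Sasaki manifold $|Ric|^2 = |Ric^T|^2 - 4R^T + c(n)$, so combining with (b) gives \eqref{eqn-SR20}.

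I expect the main obstacle to be the second step: proving the Cao--Zhou-type quadratic growth of $\psi$, the polynomial weighted-volume bound, and the nonnegativity of $R^T$ in the transverse Sasaki setting, and extracting from the Hamilton-type identity the at-most-linear growth of $R^T$ and $|\nabla\psi|^2$ in $\psi$. One must carry the Ricci soliton proofs through the transverse K\"ahler structure while keeping track of the Reeb foliation, being careful that the relevant ``distance'' is the transverse/Carnot--Carath\'eodory one; once these inputs and the transverse Bochner identity are in hand, the weighted integral estimate itself is a fairly routine cutoff-and-absorb computation.
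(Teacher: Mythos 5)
Your proposal is correct in strategy and is essentially the Munteanu--Sesum weighted integration-by-parts argument that the paper also follows, but it runs through a slightly different computational channel. The paper never integrates the traced Bochner identity $\Delta_{B,\psi}R = c_nR - 2|Ric|^2$ in this proof; instead it starts from the pointwise identity $|Ric|^2 = 2nR - \sum_{i,j}R_{ij}\psi_{ij}$ (a direct consequence of \eqref{eqn-SR9} together with the fixed components $R_{i0}=0$, $R_{00}=2n$), integrates the Hessian term by parts once using $\sum_i\nabla_i(R_{ij}e^{-\psi})=-e^{-\psi}\psi_j$ (which encodes the contracted Bianchi identity \eqref{eqn-SR15}), and absorbs via Cauchy--Schwarz. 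Your route integrates $\Delta^T_\psi R^T$ by parts once instead; after one integration by parts the two produce the same error terms ($Ric(\nabla\psi,\nabla\psi)$ and $Ric(\nabla\psi,\nabla\phi)$ type), so the arguments are equivalent. The more substantive difference is formalism: you work entirely with transverse quantities on the local K\"ahler quotients, where the clean K\"ahler--Ricci soliton identities $\tfrac12\nabla R^T = Ric^T(\nabla\psi,\cdot)$ and $R^T+\Delta^T\psi=\mathrm{const}$ hold verbatim, and only convert to $|Ric|^2$ at the end via $|Ric|^2=|Ric^T|^2-4R^T+8n+4n^2$; the paper works with the ambient curvature throughout and must track the Sasaki correction terms (the extra $\pm g_{jl}$ and $\pm\psi_l$ in \eqref{eqn-SR11}, \eqref{eqn-SR15}) coming from $R(X,\xi)Y$ and from the mixed components of the ambient Hessian of a basic function. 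Your version buys cleaner identities at the cost of having to justify that transverse integration by parts over $M$ and the local-quotient identities glue correctly; the paper's version is more laborious but stays in one consistent ambient calculus. Finally, the inputs you flag as the ``main obstacle'' --- quadratic growth of $\psi$, the Hamilton-type identity $R+|\nabla\psi|^2=(4n-2)\psi$, positivity/lower bound of the scalar curvature, and the polynomial weighted volume bound --- are exactly what the paper imports from \cite{CLL25} (its \eqref{eqn-SR22}--\eqref{eqn-SR24}) rather than proving; so your assessment of where the real work lies is accurate, and modulo those cited facts your argument closes.
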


Next by combining Lemma \ref{lem1} with Theorem \ref{thm-SGR1}, we prove that the identity (\ref{eqn-SR20'}), which is crucial in our classification result (see Theorem \ref{thm-main}),  holds under a weighted $L^2$-bound of the Riemannian curvature tensor condition.

\begin{thm}\label{thm-SGR2}

Let $(M^{2n+1}, g^T, \psi, X)$ be a complete gradient shrinking Sasaki-Ricci soliton with $\psi$ the corresponding real Hamiltonian basic function (potential) with respect to $X$.  Suppose that for some $\lambda < 1$, 
$
\int_M |Rm|^2 e^{-\l \psi} < \infty.
$
Then the following estimate holds
\begin{equation}\label{eqn-SR20'}\aligned
\int_M |{\n Ric}|^2 e^{-\psi} = \int_M |{\rm div} Rm|^2 e^{-\psi} < \infty.
\endaligned
\end{equation}

\end{thm}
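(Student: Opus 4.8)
The plan is to establish the identity $\int_M |\nabla Ric|^2 e^{-\psi} = \int_M |\mathrm{div}\, Rm|^2 e^{-\psi}$ first (this is the transverse-K\"ahler analogue of the classical fact that for a shrinking Ricci soliton the divergence of the curvature tensor is controlled by $\nabla Ric$), and then to prove finiteness of the common value using the weighted $L^2$-bound on $|Rm|$ together with the Ricci bound from Theorem~\ref{thm-SGR1}. For the first (algebraic) part, I would work with the transverse connection $\nabla$ on the normal bundle and use the second transverse Bianchi identity, which expresses $\mathrm{div}\, Rm$ in terms of $\nabla Ric$; since the transverse K\"ahler structure forces many components of $Rm$ to vanish or to be symmetric, the contractions reduce, up to the usual constant, to $\nabla Ric$. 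Combined with the soliton equation $R^T_{j\bar k} + \psi_{j\bar k} = (2n+2) g_{j\bar k}$, which gives $\nabla Ric = -\nabla^3 \psi$ plus lower order terms, this yields a pointwise identity relating the two integrands, hence the equality of the two integrals once either side is known to be finite.

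Next I would prove finiteness. The main device is an integration-by-parts (Bochner-type) argument with the weight $e^{-\psi}$, exploiting that $\psi$ is the potential and so behaves like $\frac{1}{4}\,(\mathrm{transverse\ distance})^2$ at infinity; in particular $|\nabla \psi|^2$ and $\Delta \psi$ are controlled by $\psi$ itself (the Hamilton-type identities for solitons, transferred to the transverse setting: $\Delta^T \psi + |\nabla \psi|^2 = \text{const} + (2n+2)\psi$ up to normalization, and $R^T + |\nabla\psi|^2 - (2n+2)\psi = \text{const}$). I would introduce cutoff functions $\varphi_r$ supported where $\psi \le 2r^2$, integrate $|\mathrm{div}\, Rm|^2 e^{-\psi}$ against $\varphi_r^2$, integrate by parts to move one derivative, and bound the resulting terms by $\int_M |Rm|\,|\nabla^2 Rm|\,e^{-\psi}$ and boundary-type terms involving $|\nabla \varphi_r|$. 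The curvature derivatives $\nabla^2 Rm$ would be eliminated using the soliton-evolution equation for $Rm$ (the transverse analogue of $\Delta_f Rm = Rm + Rm * Rm$), converting everything into expressions involving only $|Rm|$ and $|Ric|$; then Theorem~\ref{thm-SGR1} handles the $|Ric|^2 e^{-\psi}$ terms and the hypothesis $\int_M |Rm|^2 e^{-\lambda\psi} < \infty$ with $\lambda < 1$ gives the slack needed to absorb the quadratic curvature terms (the gap between $e^{-\lambda\psi}$ and $e^{-\psi}$ being exactly what lets one dominate a $|Rm|^2$ term against the good weight after applying the weighted Young inequality).

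The key input from earlier in the excerpt is Lemma~\ref{lem1} (referenced in the text between the two theorems), which I expect supplies precisely the Bochner/integration-by-parts identity for $\mathrm{div}\, Rm$ on a gradient shrinking Sasaki-Ricci soliton; the present proof is then: apply Lemma~\ref{lem1} to reduce to $\int_M |Rm|^2 e^{-\mu\psi}$-type quantities, invoke Theorem~\ref{thm-SGR1} for the Ricci piece, and invoke the hypothesis for the full-curvature piece, with the strict inequality $\lambda < 1$ providing room for the cutoff errors to vanish as $r \to \infty$.

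The main obstacle I anticipate is controlling the cutoff/boundary terms: one must show that the terms carrying $|\nabla \varphi_r|$ (hence supported on the annulus $r^2 \le \psi \le 2r^2$) tend to $0$ as $r \to \infty$. This requires both a good choice of cutoff so that $|\nabla \varphi_r| \lesssim r^{-1}$ on that annulus and an estimate of the form $\int_{\{\psi \sim r^2\}} |Rm|\,|\nabla Rm|\, e^{-\psi} = o(r)$, which in turn needs the soliton structure equations to control $|\nabla Rm|$ in terms of $|Rm|$ in an integrated sense — precisely the place where the strict bound $\lambda < 1$ and Theorem~\ref{thm-SGR1} must be combined carefully rather than applied naively. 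A secondary technical point is that all of this must be carried out on the normal bundle with the transverse connection and checked to be independent of the Reeb direction (i.e., genuinely basic), so that the transverse K\"ahler identities and the standard soliton identities of Cao–Wang–Zhang / Munteanu–Sesum type transfer without loss; but once the basic-function framework of Section~2 is in place, this should be routine bookkeeping.
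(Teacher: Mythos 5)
Your overall framework (weighted integration by parts against $e^{-\psi}$ with cutoffs, Lemma~\ref{lem1} as the drift Bochner formula for $Ric$, and the slack $\lambda<1$ to absorb factors of $|\nabla\psi|^2\le C\psi$) matches the paper in spirit, but the central mechanism is missing. The equality $\int_M|\nabla Ric|^2e^{-\psi}=\int_M|\mathrm{div}\,Rm|^2e^{-\psi}$ is \emph{not} a pointwise identity between the integrands, and your plan to derive one from the contracted Bianchi identity cannot work: the soliton identity gives $(\mathrm{div}\,Rm)_{jkl}=R_{jl,k}-R_{jk,l}=\sum_i\psi_iR_{ijkl}$, so $|\mathrm{div}\,Rm|^2$ differs from $2|\nabla Ric|^2$ by a cross term that does not vanish pointwise (indeed, in the proof of Theorem~\ref{thm-main} the paper exploits precisely that $|\mathrm{div}\,Rm|^2=\frac1{4n}|\nabla R|^2$ while $|\nabla Ric|^2\ge\frac1{2n}|\nabla R|^2$, so the two integrands are genuinely different). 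The paper obtains the identity by \emph{two} integrations by parts: first, Lemma~\ref{lem1} turns $\int|\nabla Ric|^2e^{-\psi}\phi^2$ into $-4n\int|Ric|^2e^{-\psi}\phi^2+2\int R_{ipjq}R_{ij}R_{pq}e^{-\psi}\phi^2+\ldots$; the soliton equation $R_{pq}=2ng_{pq}-\psi_{pq}$ cancels the quadratic Ricci terms and leaves the cubic term $-2\int R_{ipjq}R_{ij}\psi_{pq}e^{-\psi}\phi^2$; second, the divergence-free property $\sum_i\nabla_i(R_{ijkl}e^{-\psi})=0$ (a direct consequence of (\ref{eqn-lem3})) lets one integrate this cubic term by parts once more, and the result is \emph{exactly} $\int|\sum_i\psi_iR_{i\cdot\cdot\cdot}|^2e^{-\psi}\phi^2=\int|\mathrm{div}\,Rm|^2e^{-\psi}\phi^2$ plus cutoff terms. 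Without this second integration by parts there is no route from $|\nabla Ric|^2$ to $|\mathrm{div}\,Rm|^2$.

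Your proposed finiteness argument also has a concrete problem: passing through $\int|Rm|\,|\nabla^2Rm|\,e^{-\psi}$ and the evolution-type equation $\Delta_\psi Rm=Rm+Rm*Rm$ produces cubic curvature integrals of the form $\int|Rm|^3e^{-\psi}$, which are not controlled by the hypothesis $\int|Rm|^2e^{-\lambda\psi}<\infty$ or by Theorem~\ref{thm-SGR1}. The paper never differentiates $Rm$ twice. Finiteness of $\int|\mathrm{div}\,Rm|^2e^{-\psi}$ is immediate from the pointwise identity $(\mathrm{div}\,Rm)_{jkl}=\sum_i\psi_iR_{ijkl}$, which gives $|\mathrm{div}\,Rm|^2\le|\nabla\psi|^2|Rm|^2\le C\psi|Rm|^2$ and then $\psi e^{-\psi}\le Ce^{-\lambda\psi}$ -- this is the only place $\lambda<1$ is used for this term. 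The remaining cutoff terms are handled by Cauchy--Schwarz against $\int|Ric|^2e^{-\psi}$ (Theorem~\ref{thm-SGR1}) and $\int|Rm|^2e^{-\lambda\psi}$ on annuli, exactly as you anticipate; that part of your outline is sound.
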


Consequently, we can show that the above identity (\ref{eqn-SR20'}) holds for gradient shrinking Sasaki-Ricci solitons with harmonic Weyl tensor (see Corollary \ref{cor-SGR1}).  
  Moreover, by using the idea of \cite{MS13} and \cite{FG11}, we obtain the following classification for gradient shrinking Sasaki-Ricci solitons with harmonic Weyl tensor. This result may be regarded as the Sasakian counterpart of the classification for gradient shrinking Ricci solitons in \cite{MS13}. 

\begin{thm}\label{thm-main}

Let $(M^{2n+1}, g^T, \psi, X)$ be a complete gradient shrinking Sasaki-Ricci soliton with harmonic Weyl tensor.
  Then $(M^{2n+1}, g^T, \psi, X)$ is a finite quotient of $\mathbb S^{2n+1}$.

\end{thm}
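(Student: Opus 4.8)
The plan is to mirror the strategy of Munteanu--Sesum \cite{MS13}, adapted to the transverse Kähler structure, using the weighted integral estimates of Theorems \ref{thm-SGR1} and \ref{thm-SGR2} to justify all the integrations by parts. The starting observation is that on a gradient shrinking Sasaki-Ricci soliton the Reeb direction is rigid: along the Reeb field $\xi$ the geometry is transversally that of a gradient shrinking Kähler-Ricci soliton $(N^{2n}, g^T, \psi)$ in the orbifold/leaf-space sense, so it suffices to run the Kähler argument on the transverse data. Harmonic Weyl tensor for $g^T$ means ${\rm div}\, Rm^T$ is expressible purely through $\n Ric^T$ (the second Bianchi contraction), and on a shrinking soliton the soliton identity $R^T_{j\bar k}+\psi_{j\bar k}=(2n+2)g_{j\bar k}$ together with the contracted soliton equations turns this into a first-order relation for the transverse Ricci tensor. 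Concretely, first I would record the soliton identities (the transverse analogues of $\n R^T = 2\,Ric^T(\n\psi,\cdot)$ and $\D_\psi Ric^T = 2\,Ric^T - 2\,Rm^T(Ric^T)$, where $\D_\psi=\D-\n_{\n\psi}$ is the drift Laplacian), and then use Corollary \ref{cor-SGR1}, which guarantees $\int_M|\n Ric|^2 e^{-\psi}<\infty$ under the harmonic Weyl hypothesis.

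The analytic heart is a weighted Bochner–Weitzenböck computation. I would consider the quantity $\int_M \langle \D_\psi Ric^T, Ric^T\rangle\, e^{-\psi}$ and integrate by parts against the weighted measure $e^{-\psi}\,dV$; the weighted estimates ensure there is no boundary term and that every integral appearing converges. The left side becomes $-\int_M|\n Ric^T|^2 e^{-\psi}$, while the right side, after substituting the soliton evolution equation for $\D_\psi Ric^T$ and using harmonic Weyl to eliminate the full curvature operator in favor of a polynomial in $Ric^T$, $R^T$, and $g^T$, becomes an integral of a cubic curvature expression plus quadratic terms. The goal is to massage this into a statement forcing the traceless part of $Ric^T$ to vanish. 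In the Kähler-Ricci soliton case \cite{MS13} this is exactly where the computation closes: the resulting integrand, when the Weyl tensor is harmonic, is a non-negative (or sign-definite after completing the square) combination that can only vanish, and vanishing propagates to give $Rm^T$ = constant curvature. The Sasaki version should be formally identical once the transverse Bianchi identities and the fact that $\psi$ is basic (so all computations descend to the leaf space) are invoked.

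The conclusion then assembles as follows: the Bochner argument shows $Ric^T = (2n+2-c)\,g^T$ with $c$ the constant from $\D\psi$; feeding this back into the soliton equation gives $\psi_{j\bar k}=c\,g_{j\bar k}$, so $\psi$ is (up to constant) the potential of a transverse Kähler-Ricci soliton with transverse Einstein metric, hence $\n^2\psi$ proportional to $g^T$, which by the rigidity case of transverse Kähler-Einstein (or by the compactness that $\int_M e^{-\psi}<\infty$ plus transverse Einstein forces — the transverse metric is complete with positive transverse Ricci, so by a Bonnet–Myers-type argument in the transverse direction the leaf space is compact) makes $(M,g^T)$ transversally the round sphere structure; combined with the Sasaki structure this identifies $M$ as a finite quotient of $\mathbb S^{2n+1}$ with its standard Sasaki-Einstein metric. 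The main obstacle I anticipate is purely technical: verifying that the drift-Laplacian integration by parts and all the Bochner terms are legitimate requires precisely the weighted $L^2$ control $\int_M|Rm|^2 e^{-\l\psi}<\infty$ for $\l<1$ and $\int_M|\n Ric|^2 e^{-\psi}<\infty$ — i.e. Theorem \ref{thm-SGR2} and Corollary \ref{cor-SGR1} — and one must check that a harmonic Weyl tensor indeed supplies the $\l<1$ curvature bound needed to invoke Theorem \ref{thm-SGR2}; I would expect that step, rather than the algebra of the Bochner formula, to consume most of the work.
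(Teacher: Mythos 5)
Your proposal gets the scaffolding right (Theorems \ref{thm-SGR1}--\ref{thm-SGR2}, Corollary \ref{cor-SGR1}, the Munteanu--Sesum template, and the final reduction to a rigidity statement), but the central analytic step is not correct, and it is not the step the paper uses. You propose a weighted Bochner argument on $\int_M \la \D_{B,\psi} Ric, Ric\ra e^{-\psi}$ and claim that harmonic Weyl lets you ``eliminate the full curvature operator in favor of a polynomial in $Ric^T$, $R^T$, $g^T$.'' That is false: harmonic Weyl is the first-order condition $\div W=0$; it does not give any pointwise algebraic expression of $Rm$ in terms of $Ric$ (that would require $W=0$). The cubic term $\sum_{p,q}R_{pq}R_{jplq}R_{jl}$ coming from Lemma \ref{lem1} therefore survives with no sign, and the Bochner identity does not close -- this is exactly the obstruction that makes the classification nontrivial. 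What harmonic Weyl actually supplies is the Codazzi property of the Schouten tensor, hence a formula for $\n_k R_{jl}-\n_l R_{jk}$, equivalently (via (\ref{eqn-lem3}), which gives $\div Rm=\iota_{\n\psi}Rm$) the pointwise identity (\ref{eqn-SGRS8}) expressing $R(X,Y,Z,\n\psi)$ through $Ric$, $g$ and $\n\psi$. The paper's proof then diagonalizes $Ric$ with an eigenvector along $\n\psi$ to compute $|\div Rm|^2=\frac{1}{4n}|\n R|^2$ pointwise, pairs this with the elementary Cauchy--Schwarz bound $|\n Ric|^2\ge \frac{1}{2n}|\n R|^2$, and feeds both into the integral \emph{identity} $\int_M|\n Ric|^2e^{-\psi}=\int_M|\div Rm|^2e^{-\psi}$ of Corollary \ref{cor-SGR1}; the mismatch of constants $\frac{1}{4n}<\frac{1}{2n}$ forces $\n R\equiv 0$. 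No sign-definite cubic integrand is needed or available.

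Two smaller points. First, your anticipated difficulty -- that one must check harmonic Weyl supplies the $\l<1$ bound $\int_M|Rm|^2e^{-\l\psi}<\infty$ needed to invoke Theorem \ref{thm-SGR2} -- does not arise: Corollary \ref{cor-SGR1} does not route through that hypothesis, but instead bounds $\int_M|\div Rm|^2e^{-\psi}$ directly from (\ref{eqn-SGRS8}) together with Theorem \ref{thm-SGR1}. Second, the endgame differs from what you sketch: once $R$ is constant, the soliton identity (\ref{eqn-SR15}) gives $Ric(\n\psi,\n\psi)=|\n\psi|^2$, hence the transverse radial curvature vanishes, and the conclusion follows from the transverse rigidity theorem of \cite{CLL25} combined with Theorem 2.5 of \cite{MS13} -- not from a transverse Einstein condition extracted from a Bochner formula.
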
 

\vskip24pt

\section{Preliminaries}

In this section, we will recall some fundamental notions and identities for Sasakian manifolds and Sasaki-Ricci solitons. The reader is referred to \cite{BG08, CLL25, FOW09} and the references therein for some details.

\begin{defi}\label{def1}
A $(2n+1)$-dimensional Riemannian manifold $(M^{2n+1},g)$ is called a Sasaki
manifold if the cone manifold \ 
\begin{equation*}
(C(M),\overline{g},J)=(M\times \mathbb{R}^{+}\mathbf{,}r^{2}g+dr^{2},J)
\end{equation*}%
is K\"{a}hler. Note that $\{r=1\}=\{1\}\times M\subset C(M)$. Define the
Reeb vector field 
\begin{equation*}
\begin{array}{c}
\xi =J(\frac{\partial }{\partial r})%
\end{array}%
\end{equation*}%
and the contact $1$-form 
\begin{equation*}
\eta (Y)=g(\xi ,Y).
\end{equation*}%
Then $\eta (\xi )=1$ and $d\eta (\xi ,X)=0.$ $\xi $ is killing with unit
length. Furthermore, there is a natural splitting 
\begin{equation*}
\begin{array}{c}
TC(M)=L_{r\frac{\partial }{\partial r}}\oplus L_{\xi }\oplus H.%
\end{array}%
\end{equation*}%
Choose $JY=\Phi (Y)-\eta (Y)r\frac{\partial }{\partial r}$ and $\Phi
^{2}=-I+\eta \otimes \xi .$ We have $g(\Phi X,\Phi Y)=g(X,Y)-\eta (X)\eta (Y)
$ which is\ 
\begin{equation*}
g=g^{T}+\eta \otimes \eta .
\end{equation*}%
Here $(M,\xi ,\eta ,g,\Phi )$ is called the Sasakian structure.
\end{defi}

Let $\{U_{\alpha }\}_{\alpha \in A}$ be an open covering of the Sasakian
manifold $(M,\xi ,\eta ,g,\Phi )$ and $\pi_{\alpha }: U_{\alpha}\rightarrow
V_{\alpha }\subset \mathbb{C}^{n}$ submersion such that $\pi _{\alpha }\circ \pi _{\beta }^{-1}:\pi
_{\beta }(U_{\alpha }\cap U_{\beta })\rightarrow \pi _{\alpha }(U_{\alpha
}\cap U_{\beta })$ is biholomorphic. On each $V_{\alpha },$ there is a
canonical isomorphism $d\pi _{\alpha }:D_{p}\rightarrow T_{\pi _{\alpha
}(p)}V_{\alpha }$ for any $p\in U_{\alpha },$ where $D=\ker \eta \subset TM.$
Since $\xi $ generates isometrics, the restriction of the Sasakian metric $g$
to $D$ gives a well-defined Hermitian metric $g_{\alpha }^{T}$ on $V_{\alpha
}.$ This Hermitian metric in fact is K\"{a}hler. Then the K\"{a}hler $2$%
-form $\omega _{\alpha }^{T}$ of the Hermitian metric $g_{\alpha }^{T}$ on $%
V_{\alpha },$ which is the same as the restriction of the Levi form $d\eta $
to $\widetilde{D_{\alpha }^{n}}$, the slice $\{x=$ \textrm{constant}$\}$ in $%
U_{\alpha },$ is closed. The collection of K\"{a}hler metrics $\{g_{\alpha
}^{T}\}$ on $\{V_{\alpha }\}$ is so-called a transverse K\"{a}hler metric. 

For $\widetilde{X},\widetilde{Y},\widetilde{W},\widetilde{Z}\in \Gamma (TD)$
and the $d\pi _{\alpha }$-corresponding $X,Y,W,Z\in \Gamma (TV_{\alpha }).$
The Levi-Civita connection $\nabla _{X}^{T}$ with respect to the transverse K%
\"{a}hler metric $g^{T}:$%
\begin{equation*}
\begin{array}{rcl}
\nabla _{X}^{T}Y & := & d\pi _{\alpha }(\nabla _{\widetilde{X}}\widetilde{Y}%
), \\ 
\widetilde{\nabla _{X}^{T}Y} & := & \nabla _{\widetilde{X}}\widetilde{Y}%
+g(JX,Y)\xi , \\ 
Rm^{T}(X,Y,Z,W) & = & Rm_{D}(\widetilde{X},\widetilde{Y},\widetilde{Z},%
\widetilde{W})+g(J\widetilde{Y},\widetilde{W})g(J\widetilde{X},\widetilde{Z})
\\ 
&  & -g(J\widetilde{X},\widetilde{W})g(J\widetilde{Y},\widetilde{Z})-2g(J%
\widetilde{X},\widetilde{Y})g(J\widetilde{Z},\widetilde{W}), \\ 
Ric^{T}(X,Z) & = & Ric_{D}(\widetilde{X},\widetilde{Z})+2g(\widetilde{X},%
\widetilde{Z}).%
\end{array}%
\end{equation*}
 We also have
\begin{equation}\label{eqn-Rm}
R(X,\xi )Y=g(\xi ,Y)X-g(X,Y)\xi .
\end{equation}%
Then with respective to the transverse Levi-Civita connection $\nabla ^{T}$ 
\begin{equation}\label{eqn-SR2}
Ric^{T}=Ric+2g^{T}
\end{equation}%
on $D=\ker \eta $.

\begin{defi}
A Sasakian manifold $(M,\xi,\eta,g,\Phi)$ is $\eta$-Einstein if there is a
constant $A$ such that the Ricci curvature 
\begin{equation*}
Ric=Ag+(2n-A)\eta\otimes\eta. 
\end{equation*}
\end{defi}

For $g=g^{T}+\eta \otimes \eta $, we have 
\begin{equation*}
Ric^{T}=(A+2)g^{T}+2n(\eta \otimes \eta )
\end{equation*}%
and 
\begin{equation*}
Ric=Ag^{T}+2n(\eta \otimes \eta ).
\end{equation*}%
For $A=2n$%
\begin{equation*}
Ric=2ng^{T}+2n(\eta \otimes \eta )=2ng
\end{equation*}%
which is Sasaki-Einstein. In particular, $(M,g)$ is Sasaki-Einstein with 
\begin{equation*}
Ric_{g}=2ng
\end{equation*}%
if and only if the K\"{a}hler cone $(C(M),\overline{g})$ is Ricci-flat. On
the other hand, the transverse K\"{a}hler structure to the Reeb foliation $%
F_{\xi }$ is K\"{a}hler-Einstein with 
\begin{equation*}
Ric^{T}=2(n+1)g^{T}.
\end{equation*}

\begin{defi}
(\cite{FOW09, CLL25, BW08, Cao10, Ham98}) $(M,g^{T},\psi ,X)$ is called a Sasaki-Ricci
soliton (with the Hamiltonian potential $\psi $) with respect to the
Hamiltonian holomorphic vector field $X$ if, for a basic function $\psi $
such that 
\begin{equation}
\begin{array}{c}
\left\{ 
\begin{array}{l}
\psi =\sqrt{-1}\eta \left( X\right) , \\ 
\iota _{X}\omega ^{T}=\sqrt{-1}\overline{\partial }_{B}\psi ,%
\end{array}%
\right. 
\end{array}
\label{pr1}
\end{equation}%
it satisfies%
\begin{equation*}
\begin{array}{c}
Ric^{T}+\frac{1}{2}\mathcal{L}_{X}g^{T}=\left( A+2\right) g^{T}.%
\end{array}%
\end{equation*}%
Here $\iota _{X}$ denotes the contraction with $X$ and $\omega ^{T}=\frac{1}{%
2}d\eta =g^{T}\left( \Phi \left( \cdot \right) ,\cdot \right) .$ It is
called expanding, steady and shrinking if 
\begin{equation*}
A<-2;\text{\ }A=-2,\text{ }-2<A=2n
\end{equation*}%
respectively. It is called the gradient Sasaki-Ricci soliton if there is a
real Hamiltonian basic function $\psi $ with 
\begin{equation*}
\begin{array}{c}
\frac{1}{2}\mathcal{L}_{X}g^{T}=\psi _{j\overline{k}}%
\end{array}%
\end{equation*}%
such that 
\begin{equation*}
\begin{array}{c}
R_{j\overline{k}}^{T}+\psi _{j\overline{k}}=(A+2)g_{j\overline{k}}.%
\end{array}%
\end{equation*}%
or 
\begin{equation*}
\begin{array}{c}
R_{j\overline{k}}+\psi _{j\overline{k}}=Ag_{j\overline{k}}%
\end{array}%
\end{equation*}%
on $D_{p}.$
\end{defi}

Next we recall 

\begin{equation}\label{eqn-SR1}\aligned
R(X, Y)Z :=& \n_X\n_Y Z - \n_Y\n_X Z - \n_{[X, Y]}Z, \\
R(X, Y, Z, W) := & \la R(Z, W)Y, X \ra, \\
R_{\a \b \gamma \d}:=& R(e_\a, e_\b,e_\gamma, e_\d) = \la R(e_\gamma, e_\d)e_\b, e_\a \ra, \\
Ric(X, Y):=& \sum_{\a=1}^{2n+1} R(X, e_\a, Y, e_\a),
\endaligned
\end{equation}
where $\{ e_1, \cdot\cdot\cdot, e_{2n} \}$ is a local orthonormal frame on $\Gamma(D)$ and $e_{2n+1}=\xi$.

The second Bianchi identity is
\begin{equation}\label{eqn-SR1b}\aligned
R_{\a \b \gamma \d, \e}+ R_{\a\b \d \e, \gamma}+ R_{\a\b\e\gamma, \d}=0.
\endaligned
\end{equation}
By (\ref{eqn-Rm}) and Definition \ref{def1}, we have
\begin{equation}\label{eqn-SR3}\aligned
R_{i0j0}:= & R(e_i, \xi,e_j, \xi) = -R(e_i,\xi,\xi, e_j)  \\
=& -\la R(e_i, \xi)e_j, \xi \ra \\
=& - \la g(\xi, e_j)e_i - g(e_i, e_j)\xi, \xi \ra \\
=& g(e_i, e_j)=\d_{ij},
\endaligned
\end{equation}
\begin{equation}\label{eqn-SR4}\aligned
 R_{00}:= Ric(\xi, \xi) = \sum_{i=1}^{2n}R(e_i, \xi,e_i, \xi) + R_{0000}= 2n,
\endaligned
\end{equation}
and
\begin{equation}\label{eqn-SR4b}\aligned
R_{i0}:= & \sum_{\a=1}^{2n+1}R_{i\a 0\a} = \sum_{j=1}^{2n} R_{ij0j} + R_{i000}  \\
=& - \sum_{j=1}^{2n} R_{j0ij} = - \sum_{j=1}^{2n} \la R(e_j,\xi)e_j, e_i \ra \\
=& - \sum_{j=1}^{2n} \la g(\xi, e_j)e_j - g(e_j, e_j)\xi, e_i \ra \\
=& 0.
\endaligned
\end{equation}
Notice that
\begin{equation*}\label{eqn-SR5a}\aligned
R^T:=& \sum_{i=1}^{2n} Ric^T (e_i, e_i) \\
=& \sum_{i=1}^{2n} (Ric +2g)(e_i, e_i) \\
=& \sum_{i=1}^{2n} Ric(e_i, e_i) +\sum_{i=1}^{2n} 2g(e_i, e_i) \\
\endaligned
\end{equation*}
Then by using (\ref{eqn-SR4}), we obtain
\begin{equation}\label{eqn-SR5}\aligned
R^T=& \sum_{\a=1}^{2n+1}Ric(e_\a, e_\a) - Ric(\xi, \xi) +\sum_{i=1}^{2n} 2(g^T+\eta\otimes \eta)(e_i, e_i) \\
=& R - 2n + 4n = R+2n.
\endaligned
\end{equation}
From the above computation in (\ref{eqn-SR5}), we see that
\begin{equation}\label{eqn-SR6}\aligned
\sum_{i=1}^{2n}R_{ii} = \sum_{i=1}^{2n}  Ric(e_i, e_i) = R^T - 4n = R-2n.
\endaligned
\end{equation}
The gradient shrinking Sasaki-Ricci soliton equation implies
\begin{equation}\label{eqn-SR8}\aligned
R^T_{ij} -(2n+2)g_{ij} = -\psi_{ij}.
\endaligned
\end{equation}
Together with (\ref{eqn-SR2}), we get
\begin{equation}\label{eqn-SR9}\aligned
R_{ij} = 2n g_{ij} -\psi_{ij}
\endaligned
\end{equation}
and 
\begin{equation}\label{eqn-SR9b}\aligned
R = \sum_{j=1}^{2n}  R_{jj}+ R_{00} = 4n^2 - \D_B \psi +2n.
\endaligned
\end{equation}

\vskip24pt

\section{Proofs of main theorems}

The main theorems will be proven in this section. We firstly give the following lemma, which will play an important role in the proof of Theorem \ref{thm-SGR2}. 

\begin{lem}\label{lem1}

Let $(M^{2n+1}, g^T, \psi, X)$ be a complete gradient shrinking Sasaki-Ricci soliton with $\psi$ the corresponding real Hamiltonian basic function (potential) with respect to $X$, then we have
\begin{equation}\label{eqn-lem1}\aligned
\D_{B,\psi} R_{jl} := \D_B R_{jl} - \la \n\psi, \n R_{jl} \ra = 4n(R_{jl}-g_{jl}) -2\sum_{p, q=1}^{2n} R_{pq}R_{jplq}.
\endaligned
\end{equation}

\end{lem}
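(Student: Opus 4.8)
The plan is to derive the weighted Bochner-type identity \eqref{eqn-lem1} by differentiating the soliton equation \eqref{eqn-SR9} twice and commuting covariant derivatives, exactly as in the Riemannian Ricci soliton case but keeping careful track of the transverse geometry. Recall that \eqref{eqn-SR9} reads $R_{ij}=2ng_{ij}-\psi_{ij}$ on $D$, so that $\n R_{jl}=-\n^3\psi$ up to the parallel term, and one expects an identity of the form $\D_B R_{jl}=\la\n\psi,\n R_{jl}\ra+(\text{curvature-linear terms})+(\text{Ricci}\cdot Rm\text{ terms})$. The first step is to take the divergence (trace of the covariant derivative in the frame $\{e_i\}_{i=1}^{2n}$ on $\Gamma(D)$) of \eqref{eqn-SR9}, use the contracted second Bianchi identity for the transverse curvature together with \eqref{eqn-SR4b} and \eqref{eqn-SR6} to obtain the first-order relation $\tfrac12\n_j R=\div_D Ric_{\,\cdot j}+(\text{lower order})$, and thereby express $\n_j\D_B\psi$ in terms of $\n\psi$ and $Ric$; this plays the role of the classical identity $\tfrac12\n R=\div Rm$ corrected by the extra $2g^T$ in \eqref{eqn-SR2}.

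Next I would compute $\D_B R_{jl}$ directly. Applying $\n_i$ to \eqref{eqn-SR9} gives $\n_i R_{jl}=-\psi_{jli}$, and applying $\n_i$ once more and tracing over $i$ yields $\D_B R_{jl}=-\sum_i\psi_{jlii}$. The heart of the calculation is to commute the indices in $\psi_{jlii}$ down to $\psi_{iijl}=\n_{jl}(\D_B\psi)$, picking up curvature terms each time two covariant derivatives are swapped; since $\psi$ is basic, its Hessian lives on $D$ and the relevant commutation formulas involve only the transverse curvature $Rm^T$, but one must then convert $Rm^T$ back to $Rm$ (and the Ricci terms $Ric^T$ back to $Ric$) using \eqref{eqn-SR2} and the Definition \ref{def1} formulas, which is exactly where the constant $4n$ and the sign of the $R_{pq}R_{jplq}$ term will be produced. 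Substituting $\D_B\psi$ from \eqref{eqn-SR9b} (so that $\n_{jl}(\D_B\psi)=-\n_{jl}R+$ constants, and then re-expressing $\n_{jl}R$ via the first-order relation from the previous paragraph) collapses everything to the stated right-hand side $4n(R_{jl}-g_{jl})-2\sum_{p,q}R_{pq}R_{jplq}$, with the $\la\n\psi,\n R_{jl}\ra$ term emerging precisely from the term $\psi_l\,\n_j(\D_B\psi)$-type contributions, i.e. from replacing $\psi_{jli}\psi_i$ using \eqref{eqn-SR9}.

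The step I expect to be the main obstacle is the bookkeeping in the commutator computation: unlike the Riemannian setting, swapping covariant derivatives of a basic tensor on a Sasakian manifold produces not only transverse-curvature terms but also terms involving $\eta$, $d\eta$, and the Reeb field $\xi$ coming from the second line of the connection formulas in Definition \ref{def1} (the $g(JX,Y)\xi$ correction) and from \eqref{eqn-Rm}; one must verify that all such ``vertical'' contributions either cancel or combine into the clean coefficients appearing in \eqref{eqn-lem1}. Concretely, the identities \eqref{eqn-SR3}, \eqref{eqn-SR4}, \eqref{eqn-SR4b} are tailored to absorb exactly these terms — e.g. $R_{i0j0}=\d_{ij}$ feeds into the $4ng_{jl}$ piece, and $R_{i0}=0$ guarantees no stray first-order $\xi$-derivatives survive — so the proof reduces to organizing the calculation so that each of these is invoked at the right place. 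Once the vertical terms are controlled, the remaining manipulation is the same as the well-known computation $\D_f R_{ij}=2R_{ij}-2R_{ikjl}R_{kl}$ for gradient Ricci solitons (see \cite{CWZ11}), adapted by the shift $Ric^T=Ric+2g^T$, and no new analytic input is needed.
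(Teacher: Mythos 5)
Your strategy---differentiate the soliton equation (\ref{eqn-SR9}) twice and commute the fourth covariant derivatives of $\psi$ down to $\n^2(\D_B\psi)$---is the classical ``potential-function'' derivation of the weighted Lichnerowicz formula, and it is genuinely different from the paper's route. The paper never goes beyond the single antisymmetrized Ricci identity for $\n^3\psi$ in (\ref{eqn-lem3}); its real engine is the divergence identity $\sum_iR_{ijkl,i}=R_{jl,k}-R_{jk,l}=\sum_i\psi_iR_{ijkl}$, which it feeds into a computation of $\D_B R_{ijkl}$ for the \emph{full curvature tensor} (second Bianchi identity to rewrite $R_{ijkl,pp}$, Ricci identity to commute, then (\ref{eqn-lem3}) to evaluate the divergences), and only at the end does it trace and use the first Bianchi identity to collapse the quadratic terms to $-2\sum_{p,q}R_{pq}R_{jplq}$. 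In the Riemannian Ricci-soliton setting both routes are standard, so the choice of route is not in itself an objection.

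The gap is in the justification of your commutation step. You claim that ``since $\psi$ is basic, its Hessian lives on $D$,'' and on that basis reduce the commutators to transverse curvature. This is false on a Sasakian manifold: $\xi$ is Killing with $\n_X\xi=-\Phi(X)$ (up to sign convention), so for $X\in\Gamma(D)$ one has ${\rm Hess}\,\psi(\xi,X)=-(\n_X\xi)\psi=d\psi(\Phi X)$, which vanishes only where $\n\psi$ does. This matters twice. First, the identity $R_{jl}=2ng_{jl}-\psi_{jl}$ holds only for horizontal indices, and the full covariant derivative of a $(0,2)$-tensor vanishing on $D\otimes D$ but not on $\xi\otimes D$ does not vanish on horizontal slots: differentiating picks up terms proportional to $\la\Phi e_i,e_j\ra\,{\rm Hess}\,\psi(\xi,e_l)$, so your starting point $\D_BR_{jl}=-\sum_i\psi_{jlii}$ is not literally correct with the ambient connection; and if you instead use the transverse connection throughout, you must convert all the resulting $Rm^T$ terms back to $Rm$ through the dictionary of Definition \ref{def1}, bookkeeping you have not carried out. (The fact that actually protects the computation from the vertical directions, and which you should invoke instead, is (\ref{eqn-Rm}): every curvature component with exactly one $\xi$-index and three horizontal indices vanishes, cf.\ (\ref{eqn-SR4b}), so the commutators never contract against ${\rm Hess}\,\psi(\xi,\cdot)$.) Second, your endgame ``re-express $\n_j\n_lR$ via the first-order relation'' produces terms of the form $\sum_i\psi_iR_{ij,l}$, i.e.\ $\n\psi$ contracted with $\n Ric$ in the wrong slot; converting these into the $\la\n\psi,\n R_{jl}\ra$ required on the left of (\ref{eqn-lem1}) needs (\ref{eqn-lem3}) yet again, and the assertion that everything then ``collapses'' to $4n(R_{jl}-g_{jl})-2\sum_{p,q}R_{pq}R_{jplq}$ is precisely the computation that has to be exhibited. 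As it stands the proposal identifies the right obstacle but resolves it with an incorrect claim and leaves the decisive cancellations unverified.
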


\begin{proof}
 
 From (\ref{eqn-SR1b}) and (\ref{eqn-SR3}), we derive
\begin{equation}\label{eqn-lem2}\aligned
\sum_{i=1}^{2n} R_{ijkl,i}=& \sum_{i=1}^{2n} R_{ijil,k} -\sum_{i=1}^{2n} R_{ijik,l} \\
=& \sum_{\a=1}^{2n+1} R_{\a j\a l, k}-R_{0j0l,k} - \left(\sum_{\a=1}^{2n+1} R_{\a j\a k, l} - R_{0j0k,l} \right) \\
=& (R_{jl}-g_{jl})_{,k} - (R_{jk}-g_{jk})_{,l} \\
=& R_{jl,k} - R_{jk,l}.
\endaligned
\end{equation}
 Then by (\ref{eqn-SR9}), (\ref{eqn-lem2}) and the Ricci identity, we get
 \begin{equation}\label{eqn-lem3}\aligned
\sum_{i=1}^{2n}  R_{ijkl, i} =&  R_{jl,k} - R_{jk, l} = \psi_{jk,l}-\psi_{jl,k} = \sum_{i=1}^{2n} \psi_i R_{ijkl}.
\endaligned
\end{equation}
The Ricci identity implies
 \begin{equation}\label{eqn-lem10}\aligned
R_{klpj,ip}  = R_{klpj,pi} + \sum_{q=1}^{2n} (R_{qlpj} R_{qkip} + R_{kqpj}R_{qlip} + R_{klqj}R_{qpip} + R_{klpq}R_{qjip}).
\endaligned
\end{equation}
Similarly, 
 \begin{equation}\label{eqn-lem11}\aligned
R_{klpi,jp}  = & R_{klpi,pj} + \sum_{q=1}^{2n} (R_{qlpi} R_{qkjp} + R_{kqpi}R_{qljp} + R_{klqi}R_{qpjp} + R_{klpq}R_{qijp}).
\endaligned
\end{equation}
By (\ref{eqn-lem10}), (\ref{eqn-lem11}) and the second Bianchi identity, we obtain
 \begin{equation}\label{eqn-lem12}\aligned
\D_B R_{ijkl} =&  \sum_{p=1}^{2n} R_{ijkl,pp} = \sum_{p=1}^{2n} (R_{klpj,ip} - R_{klpi,jp}) \\
=& \sum_{p=1}^{2n} (R_{klpj,pi} - R_{klpi,pj}) \\
&+ \sum_{p, q=1}^{2n} ( R_{qlpj}R_{qkip}+ R_{kqpj}R_{qlip}+ R_{klqj}R_{qpip} + R_{klpq}R_{qjip} ) \\
&- \sum_{p, q=1}^{2n} ( R_{qlpi}R_{qkjp}+ R_{kqpi}R_{qljp}+ R_{klqi}R_{qpjp} + R_{klpq}R_{qijp} ) \\
=:& I_1 + (I_2 + I_3 + I_4 + I_5 ) - (I_6 + I_7 + I_8 + I_9).
\endaligned
\end{equation}
From (\ref{eqn-lem3}), we derive
 \begin{equation}\label{eqn-lem13}\aligned
I_1 = & \sum_{p=1}^{2n} \left[ (R_{pjkl,p})_{,i} - (R_{klpi,p})_{,j} \right] \\
=& \sum_{p=1}^{2n} [ ( \psi_p R_{pjkl} )_{,i} - ( \psi_p R_{pikl} )_{,j} ] \\
=& \sum_{p=1}^{2n}  (R_{klpj,i}\psi_p - R_{klpi,j}\psi_p + R_{klpj}\psi_{pi} - R_{klpi}\psi_{pj}).
\endaligned
\end{equation}
By  (\ref{eqn-SR9}),
 \begin{equation}\label{eqn-lem14}\aligned
 R_{klpj}\psi_{pi} - R_{klpi}\psi_{pj} =& R_{klpj}(2ng_{pi}-R_{pi}) - R_{klpi} (2n g_{pj} - R_{pj}) \\
 = & 4n R_{klij} + R_{klpi}R_{pj} - R_{klpj}R_{pi}.
\endaligned
\end{equation}
Using the second Bianchi identity again,
 \begin{equation}\label{eqn-lem15}\aligned
R_{klpj,i}\psi_p - R_{klpi,j}\psi_p =& (R_{klpj,i} - R_{klpi,j}) \psi_p \\
=& R_{klij,p}\psi_p = R_{ijkl,p}\psi_p.
\endaligned
\end{equation}
Substituting (\ref{eqn-lem14}) and (\ref{eqn-lem15}) into (\ref{eqn-lem13}),
 \begin{equation}\label{eqn-lem16}\aligned
I_1 = & \sum_{p=1}^{2n} R_{ijkl,p}\psi_p + 4n R_{ijkl} + \sum_{p=1}^{2n} R_{klpi}R_{pj} - \sum_{p=1}^{2n} R_{klpj}R_{pi}.
\endaligned
\end{equation}
Direct computation gives us
 \begin{equation}\label{eqn-lem17}\aligned
I_2 - I_7 = 2 \sum_{p, q=1}^{2n} R_{qlpj}R_{qkip},  \quad I_3 - I_6 = 2\sum_{p, q=1}^{2n} R_{kqpj}R_{qlip}.
\endaligned
\end{equation}
The first Bianchi identity implies
 \begin{equation}\label{eqn-lem18}\aligned
I_5 - I_9 = \sum_{p, q=1}^{2n} R_{klpq}(R_{qjip}-R_{qijp}) = \sum_{p, q=1}^{2n} R_{klpq}R_{qpij}.
\endaligned
\end{equation}
By (\ref{eqn-SR3}),
 \begin{equation*}\label{eqn-lem19}\aligned
\sum_{p=1}^{2n} R_{qpip}= \sum_{\a=1}^{2n+1} R_{q\a i\a} - R_{q0i0} = R_{qi} - g_{qi}.
\endaligned
\end{equation*}
It follows
 \begin{equation}\label{eqn-lem20}\aligned
I_4 - I_8 = \sum_{q=1}^{2n}R_{klqj}(R_{qi} - g_{qi}) -  \sum_{q=1}^{2n} R_{klqi}(R_{qj} - g_{qj}).
\endaligned
\end{equation}
Substituting (\ref{eqn-lem16})--(\ref{eqn-lem20}) into (\ref{eqn-lem12}), 
 \begin{equation*}\label{eqn-lem21}\aligned
\D_B R_{ijkl} =& \sum_{p=1}^{2n} R_{ijkl,p}\psi_p + 4n R_{ijkl} + \sum_{p=1}^{2n} R_{klpi}R_{pj} - \sum_{p=1}^{2n} R_{klpj}R_{pi} \\
& + 2 \sum_{p, q=1}^{2n} R_{qlpj}R_{qkip} + 2\sum_{p, q=1}^{2n} R_{kqpj}R_{qlip} + \sum_{p, q=1}^{2n} R_{klpq}R_{qpij} \\
& + \sum_{q=1}^{2n} R_{klqj}(R_{qi} - g_{qi}) -  \sum_{q=1}^{2n} R_{klqi}(R_{qj} - g_{qj}).
\endaligned
\end{equation*}
Notice that
 \begin{equation*}\label{eqn-lem21b}\aligned
4nR_{ijkl} - \sum_{q=1}^{2n} R_{klqj}g_{qi} + \sum_{q=1}^{2n} R_{klqi}g_{qj} = (4n-2) R_{ijkl},
\endaligned
\end{equation*}
 \begin{equation*}\label{eqn-lem21c}\aligned
\sum_{p=1}^{2n} R_{klpi}R_{pj} - \sum_{q=1}^{2n} R_{klqi}R_{qj} = 0, \quad \sum_{q=1}^{2n} R_{klqj}R_{qi} - \sum_{p=1}^{2n} R_{klpj}R_{pi} = 0.
\endaligned
\end{equation*}
Thus we have
 \begin{equation*}\label{eqn-lem22}\aligned
\D_B R_{ijkl} =& \sum_{p=1}^{2n} R_{ijkl,p}\psi_p +(4n-2) R_{ijkl} \\
& + 2\sum_{p, q=1}^{2n} (R_{iplq}R_{jpkq} -  R_{ipkq}R_{jplq}) - \sum_{p, q=1}^{2n} R_{pqij}R_{pqkl}.
\endaligned
\end{equation*}
This implies
 \begin{equation}\label{eqn-lem23}\aligned
\D_{B,\psi} R_{ijkl} =& \D_B R_{ijkl} - \la \n \psi, \n R_{ijkl} \ra \\
=& \D_B R_{ijkl} - \sum_{p=1}^{2n} R_{ijkl,p}\psi_p  \\
=& (4n-2) R_{ijkl} 
 + 2\sum_{p, q=1}^{2n} (R_{iplq}R_{jpkq} - R_{ipkq}R_{jplq}) - \sum_{p, q=1}^{2n} R_{pqij}R_{pqkl}.
\endaligned
\end{equation}
From (\ref{eqn-lem23}), we get
\begin{equation*}\label{eqn-lem24}\aligned
\D_{B,\psi}\left( \sum_{i=1}^{2n} R_{ijil} \right)=& (4n-2) \sum_{i=1}^{2n} R_{ijil} \\
 & + 2\sum_{i, p, q=1}^{2n}(R_{iplq}R_{jpiq} - R_{ipiq}R_{jplq}) - \sum_{i, p, q=1}^{2n} R_{pqij}R_{pqil} \\
 =& (4n-2) (R_{jl} - g_{jl}) + 2 \sum_{i, p, q=1}^{2n} R_{iplq}R_{jpiq} \\
 & - 2 \sum_{p, q=1}^{2n} (R_{pq}-g_{pq})R_{jplq} - \sum_{i, p, q=1}^{2n} R_{pqij}R_{pqil}.
\endaligned
\end{equation*}
Notice that
\[
2\sum_{p, q=1}^{2n} g_{pq}R_{jplq}=2 \sum_{p=1}^{2n} R_{jplp}=2(R_{jl}-g_{jl}),
\]
and 
\[
\D_{B,\psi} R_{jl} = \D_{B,\psi}\left( \sum_{i=1}^{2n} R_{ijil} + R_{0j0l}\right) 
= \D_{B,\psi}\left( \sum_{i=1}^{2n} R_{ijil} \right).
\]
Then we obtain
\begin{equation}\label{eqn-lem25}\aligned
\D_{B,\psi} R_{jl} 
=& 4n (R_{jl} - g_{jl}) + 2 \sum_{i, p, q=1}^{2n} R_{iplq}R_{jpiq} \\
& - 2\sum_{p, q=1}^{2n} R_{pq}R_{jplq}- \sum_{i, p, q=1}^{2n} R_{ijpq}R_{pqil}.
\endaligned
\end{equation}
By the first Bianchi identity,
\begin{equation}\label{eqn-lem26}\aligned
 2 \sum_{i, p, q=1}^{2n} R_{iplq}R_{jpiq} - \sum_{i, p, q=1}^{2n} R_{ijpq}R_{pqil} =& 2\sum_{i, p, q=1}^{2n} R_{pilq} R_{jipq}+ \sum_{i, p, q=1}^{2n}R_{jipq} R_{pqil} \\
 =& \sum_{i, p, q=1}^{2n} R_{jipq}(2R_{pilq} + R_{pqil}) \\
 =& \sum_{i, p, q=1}^{2n} R_{jipq}(2R_{pilq} - R_{pilq} -R_{plqi} ) \\
 =&  \sum_{i, p, q=1}^{2n} (R_{jipq} R_{pilq} - R_{jipq}R_{plqi}).
\endaligned
\end{equation}
Notice that
\[
\sum_{i, p, q=1}^{2n} R_{jipq}R_{plqi} = \sum_{i, p, q=1}^{2n} R_{jiqp}R_{qlpi} = - \sum_{i, p, q=1}^{2n}R_{jipq} R_{piql}.
\]
Therefore from (\ref{eqn-lem26}), we have
\begin{equation}\label{eqn-lem27}\aligned
 2 \sum_{i, p, q=1}^{2n} R_{iplq}R_{jpiq} - \sum_{i, p, q=1}^{2n} R_{ijpq}R_{pqil} =  \sum_{i, p, q=1}^{2n} R_{jipq} (R_{pilq} + R_{piql})=0.
\endaligned
\end{equation}
Combining (\ref{eqn-lem25}) with (\ref{eqn-lem27}), it follows
\begin{equation*}\label{eqn-lem28}\aligned
\D_{B,\psi} R_{jl} =& 4n (R_{jl} - g_{jl}) - 2\sum_{p, q=1}^{2n} R_{pq}R_{jplq}.
\endaligned
\end{equation*}
\end{proof}

\noindent{\bf Proof of Theorem \ref{thm-SGR1}} \quad From  (\ref{eqn-SR6}) and (\ref{eqn-SR9}), we obtain
\begin{equation}\label{eqn-SR17}\aligned
\sum_{i,j=1}^{2n} R_{ij}^2 = \sum_{i,j=1}^{2n} R_{ij}(2n g_{ij}-\psi_{ij})  
=& 2n\sum_{i=1}^{2n} R_{ii} -\sum_{i,j=1}^{2n}R_{ij}\psi_{ij} \\
=& 2n(R-2n) -\sum_{i,j=1}^{2n} R_{ij}\psi_{ij}.
\endaligned
\end{equation}
Then by (\ref{eqn-SR4}), (\ref{eqn-SR4b}) and (\ref{eqn-SR17}),
\begin{equation}\label{eqn-SR18}\aligned
| Ric |^2=& \sum_{i,j=1}^{2n} R_{ij}^2 + 2\sum_{i=1}^{2n}R_{i0}^2 +R_{00}^2 \\
=&\sum_{i,j=1}^{2n} R_{ij}^2 +4n^2 
= 2nR -\sum_{i,j=1}^{2n} R_{ij}\psi_{ij}. 
\endaligned
\end{equation}

Let $\phi$ be a cut-off function on $M$. The above equality (\ref{eqn-SR18}) implies  
\begin{equation}\label{eqn-SR19a}\aligned
\int_M | Ric |^2 e^{-\l \psi}\phi^2=& 2n \int_M Re^{-\l \psi}\phi^2 - \sum_{i, j=1}^{2n} \int_M R_{ij}\psi_{ij}e^{-\l \psi}\phi^2 \\
=& 2n \int_M Re^{-\l \psi}\phi^2  + \sum_{i, j=1}^{2n} \int_M \psi_i \n_j(R_{ij}e^{-\l \psi}\phi^2) \\
=& 2n \int_M Re^{-\l \psi}\phi^2  + \sum_{i, j=1}^{2n} \int_M \psi_i \n_j(R_{ij}e^{-\psi})\cdot e^{(1-\l)\psi}\cdot \phi^2 \\
& + \sum_{i, j=1}^{2n}\int_M \psi_i R_{ij}e^{-\l\psi}\cdot (1-\l)\psi_j \phi^2 + \sum_{i, j=1}^{2n} \int_M \psi_i R_{ij}e^{-\l \psi}(\phi^2)_j.
 \endaligned
\end{equation}

By the equality (\ref{eqn-lem3}),
\begin{equation*}\label{eqn-SR10}\aligned
\sum_{j-1}^{2n} ( R_{jl,j} - R_{jj,l} ) = \sum_{i, j=1}^{2n} \psi_i R_{ijjl}.
\endaligned
\end{equation*}
It follows
\begin{equation*}\label{eqn-SR11a}\aligned
\sum_{j=1}^{2n}R_{jl,j}-(R-2n)_{,l}=& - \sum_{i=1}^{2n} \psi_i (R_{il}-R_{i0l0}) \\
=& -\sum_{i=1}^{2n} \psi_i R_{il}+\psi_l.
\endaligned
\end{equation*}
Namely,
\begin{equation}\label{eqn-SR11}\aligned
\sum_{j=1}^{2n} R_{jl,j}-R_{,l}= -\sum_{i=1}^{2n} \psi_i R_{il}+\psi_l.
\endaligned
\end{equation}
From (\ref{eqn-SR1b}), we derive
\begin{equation}\label{eqn-SR12a}\aligned
R_{ijkj,p}+ R_{ijjp,k}+ R_{ijpk,j}=0.
\endaligned
\end{equation}
Notice that
\begin{equation}\label{eqn-SR12b}\aligned
\sum_{j=1}^{2n} R_{ijkj,p}= &\sum_{\a=1}^{2n+1} R_{i\a k\a,p}-R_{i0k0,p} = R_{ik,p}, \\
\sum_{j=1}^{2n} R_{ijjp,k} =& - \sum_{\a=1}^{2n+1} R_{i\a p \a,k} + R_{i0p0, k} = - R_{ip,k}.
\endaligned
\end{equation}
Then by (\ref{eqn-SR12a}) and (\ref{eqn-SR12b}), we get
\begin{equation*}\label{eqn-SR12c}\aligned
R_{ik,p} -R_{ip, k} + \sum_{j=1}^{2n} R_{ijpk, j} = 0.
\endaligned
\end{equation*}
Thus 
\begin{equation*}\label{eqn-SR13}\aligned
 \sum_{i=1}^{2n} R_{ii,p}-  \sum_{i=1}^{2n} R_{ip,i} +  \sum_{i, j=1}^{2n} R_{ijpi,j}=0.
\endaligned
\end{equation*}
Namely,
\begin{equation}\label{eqn-SR14}\aligned
R_{,l} =  \sum_{i=1}^{2n} (R_{ii} + R_{00})_{, l} =  \sum_{i=1}^{2n} R_{ii,l}  = 2 \sum_{j=1}^{2n} R_{lj,j}.
\endaligned
\end{equation}
Combining (\ref{eqn-SR11}) with (\ref{eqn-SR14}), 
\begin{equation}\label{eqn-SR15}\aligned
R_{,l} = 2 \sum_{i=1}^{2n}  \psi_i R_{il}-2 \psi_l.
\endaligned
\end{equation}
Hence by (\ref{eqn-SR14}) and (\ref{eqn-SR15}), we have
\begin{equation}\label{eqn-SR16}\aligned
 \sum_{i=1}^{2n} \n_i(R_{ij}e^{-\psi}) =&  \sum_{i=1}^{2n} R_{ij, i}e^{-\psi} -  \sum_{i=1}^{2n} R_{ij}e^{-\psi}\psi_i \\
=&  e^{-\psi}\left( \sum_{i=1}^{2n}R_{ij,i} - \sum_{i=1}^{2n} \psi_i R_{ij} \right) \\
=& e^{-\psi}(\frac{1}{2}R_{,j} -  \sum_{i=1}^{2n} \psi_iR_{ij}) \\
=& -e^{-\psi}\psi_j.
\endaligned
\end{equation}
Sustituting (\ref{eqn-SR16}) into (\ref{eqn-SR19a}), we derive
\begin{equation}\label{eqn-SR19}\aligned
\int_M | Ric |^2 e^{-\l \psi}\phi^2
=& 2n \int_M Re^{-\l \psi}\phi^2 + (1-\l) \sum_{i, j=1}^{2n}\int_M  R_{ij}\psi_i \psi_j e^{-\l\psi} \phi^2 \\
& -  \int_M |\n\psi|^2 e^{-\l \psi}  \phi^2 + \sum_{i, j=1}^{2n} \int_M \psi_i R_{ij}e^{-\l \psi}(\phi^2)_j.
 \endaligned
\end{equation}
By (\ref{eqn-SR6}), (\ref{eqn-SR18}) and the Cauchy--Schwarz inequality, we obtain  
\begin{equation}\label{eqn-SR21a}\aligned
 2n \int_M Re^{-\l \psi}\phi^2 =& 2n \int_M (\sum_{i}R_{ii} + 2n)e^{-\l \psi}\phi^2 \\
 \leq & \frac{1}{8n} \int_M (\sum_i R_{ii})^2 e^{-\l \psi}\phi^2 + 8n^3 \int_M e^{-\l \psi}\phi^2 + 4n^2 \int_M e^{-\l \psi}\phi^2 \\
 \leq & \frac{1}{4} \int_M |Ric|^2 e^{-\l \psi}\phi^2 + (8n+3)n^2 \int_M e^{-\l \psi}\phi^2. 
\endaligned
\end{equation}
An easy algebraic manipulation gives
\begin{equation}\label{eqn-SR21}\aligned
&(1-\l) \sum_{i, j}\int_M  R_{ij}\psi_i \psi_j e^{-\l\psi} \phi^2 \\
\leq & \frac{1}{4}\int_M (| Ric |^2-4n^2) e^{-\l\psi}\phi^2 +(1-\l)^2 \int_M |\n \psi|^4 e^{-\l\psi}\phi^2, \\
\endaligned
\end{equation}
\begin{equation}\label{eqn-SR21c}\aligned
\sum_{i, j} \int_M \psi_i R_{ij}e^{-\l \psi}(\phi^2)_j \leq & \frac{1}{4}\int_M (| Ric |^2-4n^2) e^{-\l\psi}\phi^2 + 4 \int_M |\n \psi|^2 e^{-\l\psi}|\n \phi|^2.
\endaligned
\end{equation}
From (\ref{eqn-SR19})--(\ref{eqn-SR21c}), we have
\begin{equation}\label{eqn-SR21d}\aligned
\int_M | Ric |^2 e^{-\l \psi}\phi^2
\leq &  4(8n+1)n^2 \int_M e^{-\l \psi}\phi^2 \\
& + 4(1-\l)^2 \int_M |\n \psi|^4 e^{-\l \psi}\phi^2 
 +16 \int_M |\n \psi|^2 e^{-\l\psi}|\n \phi|^2.
 \endaligned
\end{equation}
It follows from Lemma 2 in \cite{CLL25} by assuming $C_1=0$, we have 
\begin{equation}\label{eqn-SR22}\aligned
R+|\n\psi|^2=(4n-2)\psi.
\endaligned
\end{equation}
By proposition 3 in \cite{CLL25}, we get
\begin{equation}\label{eqn-SR23}\aligned
n(d(x, y)-7)^2_{+} \leq \psi(x)+C_2 \leq n(d(x, y)+\sqrt 3)^2,
\endaligned
\end{equation}
where $y$ is a minimum point of $\psi$.

 By using (\ref{eqn-SR23}), for any $\mu>0$,
\begin{equation}\label{eqn-SGRS11}\aligned
\int_M   e^{-\mu\psi} =& \sum_{j=0}^{\infty} \int_{B_{(j+1)r}\backslash B_{j r}} e^{-\mu \psi}  \\
\leq & \sum_{j=0}^\infty e^{-\mu n(jr-7)^2} \cdot {\rm Vol}(B_{(j+1)r}) \\
\leq & \sum_{j=0}^\infty Ce^{-\mu n(jr-7)^2} (j+1)^d r^d < \infty.
\endaligned
\end{equation}
The formula (6.1)(v) in \cite{CLL25} implies that the scalar curvature $R$ satisfies
\begin{equation}\label{eqn-SR24}\aligned
R \geq \frac{C_3}{\psi}
\endaligned
\end{equation}
for some positive constant $C_3$. From (\ref{eqn-SR22})--(\ref{eqn-SR24}), we know that
\[
\int_M |\n\psi|^4 e^{-\l \psi} < \infty \quad and \quad \int_M |\n \psi|^2 e^{-\l \psi} < \infty.
\]
Hence by (\ref{eqn-SR21d}), we conclude that
\[
\int_M | Ric |^2 e^{-\l \psi} < \infty.
\]
\qed

\noindent{\bf Proof of Theorem \ref{thm-SGR2}} \quad By (\ref{eqn-lem3}),  we obtain
\begin{equation}\label{eqn-SRS3}\aligned
\sum_{i=1}^{2n} \n_i \left(R_{ijkl} e^{-\psi} \right) = e^{-\psi}\left( \sum_{i=1}^{2n} R_{ijkl, i}-\sum_{i=1}^{2n} \psi_i R_{ijkl}\right) = 0.
\endaligned
\end{equation}
The second Bianchi identity implies
\begin{equation}\label{eqn-SRS3b}\aligned
R_{ij,0} =& \sum_{\a=1}^{2n+1} R_{i\a j\a, 0} = \sum_{k=1}^{2n} R_{ikjk,0}+ R_{i0j0,0} \\
=&  \sum_{k=1}^{2n}R_{ikjk, 0} = -  \sum_{k=1}^{2n} (R_{ikk0, j} + R_{ik0j, k}) \\
=& 0.
\endaligned
\end{equation}
Combining (\ref{eqn-SR4}), (\ref{eqn-SR4b}), (\ref{eqn-SR9}) with (\ref{eqn-SRS3b}), it follows
\begin{equation}\label{eqn-SRS5}\aligned
|\n Ric|^2 = &\sum_{\a,\b,\gamma=1}^{2n+1} |\n_\gamma R_{\a\b}|^2 \\
=& \sum_{\gamma=1}^{2n+1} \sum_{i,j=1}^{2n} |\n_\gamma R_{ij}|^2 + 2 \sum_{\gamma=1}^{2n+1}\sum_{i=1}^{2n} |\n_\gamma R_{i0}|^2 + \sum_{\gamma=1}^{2n+1} |\n_\gamma R_{00}|^2 \\
=& \sum_{\gamma=1}^{2n+1} \sum_{i,j=1}^{2n} |\n_\gamma R_{ij}|^2 \\
=&  \sum_{i,j,k=1}^{2n} |\n_k R_{ij}|^2 = \sum_{i, j=1}^{2n} |\n^T R_{ij}|^2.
\endaligned
\end{equation}

Let $B_r$ be the closed geodesic ball of $M$. Let $\phi$ be a smooth cut-off function on $M$ such that $\phi \equiv 1$ on $B_r, \phi \equiv 0$ outside $B_{2r}$ and $|\n\phi \leq \frac{C}{r}$ on $B_{2r} \backslash B_r$. 
Notice that
\begin{equation*}\label{eqn-SRS6}\aligned
\D_{B,\psi}R_{ij} = \D_B R_{ij} - \la \n \psi, \n R_{ij} \ra = e^\psi \div^T\left( e^{-\psi} \n^T R_{ij} \right),
\endaligned
\end{equation*}
and 
\begin{equation}\label{eqn-SRS7}\aligned
&\div^B\left( e^{-\psi} \n^T R_{ij} \cdot R_{ij} \phi^2 \right) \\
=& \div^B\left( e^{-\psi} \n^T R_{ij} \right) R_{ij}\phi^2 + \la e^{-\psi}\n^T R_{ij}, \n^T(R_{ij}\phi^2) \ra \\
=& (\D_{B, \psi}R_{ij}) R_{ij} e^{-\psi} \phi^2 + e^{-\psi} |\n^T R_{ij}|^2 \phi^2 + e^{-\psi} R_{ij} \la \n^T R_{ij}, \n^T \phi^2 \ra.
\endaligned
\end{equation}
Then from (\ref{eqn-SRS5}) and (\ref{eqn-SRS7}), we get
\begin{equation*}\label{eqn-SRS8a}\aligned
\int_M |\n Ric|^2 e^{-\psi} \phi^2
= - \sum_{i,j=1}^{2n} \int_M (\D_{B, \psi}R_{ij})R_{ij} e^{-\psi}\phi^2 -  \sum_{i,j,k=1}^{2n} \int_M (\n_k R_{ij})R_{ij}e^{-\psi}(\phi^2)_k. 
\endaligned
\end{equation*}
Hence by Lemma \ref{lem1}, we obtain
\begin{equation}\label{eqn-SRS8}\aligned
&\int_M |\n Ric|^2 e^{-\psi} \phi^2 \\
=& -  \sum_{i,j=1}^{2n} \int_M \left( 4n (R_{ij}-g_{ij} ) -2  \sum_{p,q=1}^{2n} R_{pq}R_{ipjq} \right) R_{ij} e^{-\psi} \phi^2 \\
& -  \sum_{i,j,k=1}^{2n}\int_M (\n_k R_{ij}) R_{ij} e^{-\psi} (\phi^2)_k \\
=& -4n  \sum_{i,j=1}^{2n} \int_M R_{ij}^2 e^{-\psi} \phi^2  + 4n  \sum_{i,j=1}^{2n} \int_M g_{ij}R_{ij} e^{-\psi} \phi^2 \\ 
& + 2  \sum_{i,j,p,q=1}^{2n} \int_M R_{ipjq} R_{ij}R_{pq} e^{-\psi} \phi^2 -  \sum_{i,j,k=1}^{2n}\int_M (\n_k R_{ij}) R_{ij} e^{-\psi} (\phi^2)_k.
\endaligned
\end{equation}
The equalities (\ref{eqn-SR3}) and (\ref{eqn-SR9}) imply that
\begin{equation*}\label{eqn-SRS9a}\aligned
 \sum_{i,j,p,q=1}^{2n} R_{ipjq} R_{ij}R_{pq} = &  \sum_{i,j,p,q=1}^{2n} R_{ipjq} R_{ij}(2ng_{pq}-\psi_{pq}) \\
 =& 2n  \sum_{i,j,p=1}^{2n} R_{ij}R_{ipjp} -  \sum_{i,j,p,q=1}^{2n} R_{ipjq}R_{ij}\psi_{pq} \\
 =& 2n  \sum_{i,j=1}^{2n} R_{ij}(R_{ij}-g_{ij}) -  \sum_{i,j,p,q=1}^{2n} R_{ipjq}R_{ij}\psi_{pq} \\
 =& 2n \sum_{i,j=1}^{2n} R_{ij}^2 - 2n  \sum_{i,j=1}^{2n} g_{ij}R_{ij} -  \sum_{i,j,p,q=1}^{2n} R_{ipjq}R_{ij}\psi_{pq}.
\endaligned
\end{equation*}
It follows 
\begin{equation}\label{eqn-SRS9}\aligned
& 2 \sum_{i,j,p,q=1}^{2n} \int_M R_{ipjq} R_{ij}R_{pq} e^{-\psi} \phi^2 \\
=& 4n\sum_{i,j=1}^{2n}  \int_M R_{ij}^2 e^{-\psi} \phi^2 - 4n \sum_{i,j=1}^{2n}  \int_M g_{ij}R_{ij} e^{-\psi} \phi^2 \\
& - 2 \sum_{i,j,p,q=1}^{2n} \int_M R_{ipjq}R_{ij} \psi_{pq}e^{-\psi}\phi^2.
\endaligned
\end{equation}
From (\ref{eqn-SRS8}) and (\ref{eqn-SRS9}), we derive
\begin{equation}\label{eqn-SRS10}\aligned
&\int_M |\n Ric|^2 e^{-\psi} \phi^2 \\
=& - 2\sum_{i,j,p,q=1}^{2n} \int_M R_{ipjq}R_{ij} \psi_{pq}e^{-\psi}\phi^2 \\
&- \sum_{i,j,k=1}^{2n} \int_M (\n_k R_{ij}) R_{ij} e^{-\psi} (\phi^2)_k.
\endaligned
\end{equation}
Integrating by parts and the equality (\ref{eqn-SRS3}) give us
\begin{equation}\label{eqn-SRS11}\aligned
 &- 2\sum_{i,j,p,q=1}^{2n} \int_M R_{ipjq}R_{ij} \psi_{pq}e^{-\psi}\phi^2 \\
 =& 2 \sum_{i,j,p,q=1}^{2n} \int_M \n_q (R_{qjpi}e^{-\psi}) \cdot R_{ij} \phi^2 \psi_p + 2\sum_{i,j,p,q=1}^{2n} \int_M \n_q(R_{ij}\phi^2) \cdot R_{ipjq} e^{-\psi}\psi_p \\
 =& 2\sum_{i,j,p,q=1}^{2n} \int_M R_{ipjq} (\n_q R_{ij})\psi_p e^{-\psi}\phi^2 + 2\sum_{i,j,p,q=1}^{2n} \int_M R_{ipjq}R_{ij} \psi_p e^{-\psi} (\phi^2)_q. 
\endaligned
\end{equation}
By (\ref{eqn-lem3}), we get
\begin{equation}\label{eqn-SGRS9}\aligned
(\div Rm)_{\b\gamma\d} =& \sum_{\a=1}^{2n+1} R_{\a\b\gamma\d, \a}= \sum_{i=1}^{2n} R_{i\b\gamma\d, i} + R_{0\b\gamma\d, 0} \\
=& \sum_{i=1}^{2n} R_{i\b\gamma\d, i} =  \sum_{i=1}^{2n} R_{ijkl, i}=(\div Rm)_{jkl} = \sum_{i=1}^{2n}  \psi_i R_{ijkl}.
\endaligned
\end{equation}
Direct computation gives
\begin{equation}\label{eqn-SRS12}\aligned
& 2\sum_{i,j,p,q=1}^{2n} R_{ipjq} (\n_q R_{ij})\psi_p = -2 \sum_{i,j,p,q=1}^{2n} R_{qjip}\psi_p (\n_q R_{ij}) \\
=& -2\sum_{i,j,p,q=1}^{2n} R_{jqip}\psi_p (\n_j R_{iq}) 
= 2 \sum_{i,j,p,q=1}^{2n} R_{qjip}\psi_p (\n_j R_{iq}) \\
=&  \sum_{i,j,p,q=1}^{2n} R_{qjip}\psi_p (\n_j R_{iq} - \n_q R_{ij}).
\endaligned
\end{equation}
Then from (\ref{eqn-lem3}), (\ref{eqn-SGRS9}) and (\ref{eqn-SRS12}), we deduce
\begin{equation}\label{eqn-SRS12b}\aligned
 2\sum_{i,j,p,q=1}^{2n} R_{ipjq} (\n_q R_{ij})\psi_p 
=& \sum_{i,j,p,q, h=1}^{2n} R_{qjip}\psi_p \cdot \psi_h R_{hijq} \\
=& \sum_{i,j,q=1}^{2n}\left( (\div Rm)_{ijq} \right)^2\\
=& \sum_{\b,\gamma,\d=1}^{2n+1} ((\div Rm)_{\b\gamma\d})^2 =  |\div Rm|^2.
\endaligned
\end{equation}
Substituting  (\ref{eqn-SRS11}) and (\ref{eqn-SRS12b}) into (\ref{eqn-SRS10}),
\begin{equation}\label{eqn-SRS13}\aligned
\int_M |\n Ric|^2 e^{-\psi} \phi^2 = & \int_M |\div Rm|^2 e^{-\psi}\phi^2 + 2\sum_{i,j,p,q=1}^{2n} \int_M R_{ipjq}R_{ij} \psi_p e^{-\psi} (\phi^2)_q \\
& - \sum_{i,j,k=1}^{2n}\int_M (\n_k R_{ij}) R_{ij} e^{-\psi} (\phi^2)_k.
\endaligned
\end{equation}
Notice that
\begin{equation*}\label{eqn-SRS16}\aligned
 \int_M |\div Rm|^2 e^{-\psi}\phi^2 \leq & C\int_M |Rm|^2 |\n \psi|^2 e^{-\psi} \\
 \leq & C\int_M |Rm|^2\psi e^{-\psi} \leq C\int_M |Rm|^2 e^{-\l \psi} < \infty,
\endaligned
\end{equation*}
and 
\begin{equation*}\label{eqn-SRS17}\aligned
 & 2\sum_{i,j,p,q=1}^{2n} \int_M R_{ipjq}R_{ij} \psi_p e^{-\psi} (\phi^2)_q \\
 \leq & C\int_M |Rm|^2 |\n \psi| e^{-\psi} \leq C\int_M |Rm|^2 e^{-\l \psi} < \infty.
\endaligned
\end{equation*}
Since
\begin{equation*}\label{eqn-SRS18}\aligned
\int_M |\n Ric|^2 e^{-\psi} \phi^2 \leq & C -\sum_{i,j,k=1}^{2n} \int_M (\n_k R_{ij}) R_{ij} e^{-\psi} (\phi^2)_k \\
 \leq & C+ 2\sum_{i,j,k=1}^{2n}\int_M |\n_k R_{ij}||R_{ij}|e^{-\psi} \phi |\n \phi| \\
\leq & C + \frac{1}{2}\int_M |\n Ric|^2 e^{-\psi}  \phi^2 + 2 \int_M |Ric|^2 e^{-\psi} |\n \phi|^2,
\endaligned
\end{equation*}
therefore by using Theorem \ref{thm-SGR1} again, we can conclude that
\[
\int_M |\n Ric|^2 e^{-\psi} < \infty.
\]
By H\"older inequality, we derive that as $r \to \infty$
\begin{equation}\label{eqn-SRS19}\aligned
 &\left| \sum_{i,j,k=1}^{2n}\int_M (\n_k R_{ij}) R_{ij} e^{-\psi} (\phi^2)_k \right| \\
 \leq & \frac{C}{r}\left( \int_M |\n Ric |^2 e^{-\psi} \right)^{\frac{1}{2}} \left( \int_{B_{2r}\backslash B_r} | Ric |^2e^{-\psi}\right)^{\frac{1}{2}} \to 0, 
\endaligned
\end{equation}
and
\begin{equation}\label{eqn-SRS20}\aligned
 \left| \sum_{i,j,p,q=1}^{2n}\int_M R_{ipjq}R_{ij} \psi_p e^{-\psi} (\phi^2)_q \right| \leq \frac{C}{r} \int_{B_{2r}\backslash B_r} | Rm |^2e^{-\l\psi} \to 0.
\endaligned
\end{equation}
Combining (\ref{eqn-SRS13}), (\ref{eqn-SRS19}) with (\ref{eqn-SRS20}), it follows 
\[
\int_M |{\n Ric}|^2 e^{-\psi} = \int_M |\div Rm|^2 e^{-\psi} < \infty.
\]
\qed

\begin{cor}\label{cor-SGR1}

Let $(M^{2n+1}, g, \psi)$ be a complete Sasakian gradient Ricci soliton with harmonic Weyl tensor.
  Then we have
\begin{equation}\label{eqn-SR20}\aligned
\int_M |{\n Ric}|^2 e^{-\psi} = \int_M |{\rm div} Rm|^2 e^{-\psi} < \infty.
\endaligned
\end{equation}

\end{cor}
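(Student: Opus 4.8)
My plan is not to verify the hypothesis $\int_M|Rm|^2e^{-\l\psi}<\infty$ of Theorem \ref{thm-SGR2} directly --- I do not see how to control the Weyl part of $Rm$ in a weighted $L^2$ sense in general --- but instead to revisit the proof of Theorem \ref{thm-SGR2}. That argument uses the hypothesis $\int_M|Rm|^2e^{-\l\psi}<\infty$ only at three points after the identity (\ref{eqn-SRS13}): to bound $\int_M|\div Rm|^2e^{-\psi}\phi^2$, to bound $\sum_{i,j,p,q}\int_M R_{ipjq}R_{ij}\psi_pe^{-\psi}\phi^2$ together with its cut-off companion, and to show that the cut-off term built from $Rm$ tends to $0$. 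Under the harmonic Weyl hypothesis I would re-estimate all three of these using only Theorem \ref{thm-SGR1} together with the scalar curvature growth, and otherwise reproduce the proof verbatim.

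The first ingredient is the pointwise bound $|\div Rm|^2\le C(|Ric|^2+1)\psi$. Since $2n+1\ge 5$ (the case $n=1$ is vacuous for the Weyl tensor and should be handled directly from the three-dimensional soliton equations), harmonic Weyl tensor is equivalent to the vanishing of the Cotton tensor, which on the transverse indices reads $R_{jl,k}-R_{jk,l}=c_n\bigl(R_{,k}g_{jl}-R_{,l}g_{jk}\bigr)$ for an explicit constant $c_n$; combined with $(\div Rm)_{jkl}=\sum_{i=1}^{2n}R_{ijkl,i}=R_{jl,k}-R_{jk,l}$ from (\ref{eqn-lem2}) this gives $|\div Rm|\le C|\n R|$, and by the soliton identity (\ref{eqn-SR15}) and (\ref{eqn-SR22}) one gets $|\n R|^2\le C(|Ric|^2+1)|\n\psi|^2\le C(|Ric|^2+1)\psi$. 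The second ingredient is that the Weyl tensor is orthogonal to the Ricci tensor in every contraction. To prove this I would set $T_{ijk}:=\sum_{l=1}^{2n}W_{ijkl}\psi_l$ and use the Cotton--Weyl identity on gradient shrinking solitons (the transverse analogue of $C_{jkl}=\sum_i\psi_iW_{ijkl}$), which together with the vanishing Cotton tensor gives $T\equiv 0$; then, taking the divergence of $T$ in each of its three slots and inserting $\div W=0$ and the soliton Hessian $\psi_{kl}=2ng_{kl}-R_{kl}$ from (\ref{eqn-SR9}), the vanishing of $\n T$ forces $\sum W_{ijkl}R_{kl}=\sum W_{ijkl}R_{jl}=\sum W_{ijkl}R_{il}=0$, and the first Bianchi identity propagates this to the last contraction, so $\sum_{i,j}W_{ipjq}R_{ij}=0$. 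Inserting the Weyl decomposition of $R_{ipjq}$ into $\sum_{i,j}R_{ipjq}R_{ij}$ then cancels the Weyl term and leaves a quadratic expression in $Ric$, $R$, $g$, so $\bigl|\sum_{i,j}R_{ipjq}R_{ij}\bigr|\le C(|Ric|^2+R^2+1)$.

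With these two facts in hand I would re-run (\ref{eqn-SRS8})--(\ref{eqn-SRS13}) unchanged. In the resulting identity (\ref{eqn-SRS13}) one now has $\int_M|\div Rm|^2e^{-\psi}\phi^2\le C\int_M(|Ric|^2+1)\psi\,e^{-\psi}$, while the terms involving $\sum R_{ipjq}R_{ij}\psi_p$ are bounded pointwise by $C(|Ric|^2+R^2+1)|\n\psi|$ times $\phi^2$ or $|\n\phi|\phi$ --- with no occurrence of $|Rm|^2$. Since $0\le R\le(4n-2)\psi$ by (\ref{eqn-SR22}) and $\psi$ grows at most quadratically in the distance by (\ref{eqn-SR23}), the volume estimate (\ref{eqn-SGRS11}) gives $\int_M(R^2+1)\psi\,e^{-\l\psi}<\infty$, and Theorem \ref{thm-SGR1} gives $\int_M|Ric|^2\psi\,e^{-\l\psi}<\infty$, for every $\l>0$; choosing $\l<1$ so that $\psi e^{-\psi}\le C_\l e^{-\l\psi}$ for $\psi$ large, all of these are finite. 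Exactly as in the proof of Theorem \ref{thm-SGR2} this first yields $\int_M|\n Ric|^2e^{-\psi}<\infty$, after which the two cut-off terms in (\ref{eqn-SRS13}) tend to $0$ as $r\to\infty$ by H\"older's inequality --- using $\int_{B_{2r}\setminus B_r}|Ric|^2e^{-\psi}\to 0$ for one and $\tfrac{C}{r}\int_{B_{2r}\setminus B_r}(|Ric|^2+R^2+1)\psi\,e^{-\l\psi}\to 0$ for the other --- so that $\int_M|\n Ric|^2e^{-\psi}=\int_M|\div Rm|^2e^{-\psi}<\infty$.

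The step I expect to be most delicate is the second one: formulating the harmonic Weyl condition and the Cotton--Weyl identity correctly in the transverse K\"ahler framework --- in particular checking that the ``spherical'' contribution of the $\xi$-direction recorded in (\ref{eqn-SR3})--(\ref{eqn-SR4}) shifts the Weyl tensor only by explicit trace terms, so that the divergence computations for $T_{ijk}$ go through unchanged --- and then verifying that $\sum_{i,j}W_{ipjq}R_{ij}=0$ genuinely follows. This is the only place where the harmonic Weyl hypothesis is used in an essential way rather than as bookkeeping; everything else is a controlled rerun of the proof of Theorem \ref{thm-SGR2}.
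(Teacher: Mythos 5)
Your overall architecture is the same as the paper's: rerun the proof of Theorem \ref{thm-SGR2} and replace each use of $\int_M|Rm|^2e^{-\lambda\psi}<\infty$ by a bound extracted from the harmonic Weyl hypothesis. Your first ingredient is correct and is precisely what the paper does: vanishing Cotton tensor (equivalently, the Codazzi property of the Schouten tensor $Ric-\frac{R}{4n}g$) combined with $(\mathrm{div}\,Rm)_{jkl}=R_{jl,k}-R_{jk,l}=\sum_i\psi_iR_{ijkl}$ and the identity (\ref{eqn-SR15}) yields the paper's (\ref{eqn-SGRS7})--(\ref{eqn-SGRS8}) and hence the pointwise bound $|\mathrm{div}\,Rm|\le C(|Ric|+1)|\nabla\psi|$, after which Theorem \ref{thm-SGR1}, $|\nabla\psi|^2\le(4n-2)\psi$ and the volume estimate (\ref{eqn-SGRS11}) give the weighted $L^2$ finiteness.

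Your second ingredient, however, contains a genuine error. The identity you invoke, $C_{jkl}=\sum_i\psi_iW_{ijkl}$, is not the Cotton--Weyl identity for gradient solitons; the correct identity has the form $C_{ijk}=W_{ijkl}\psi_l+D_{ijk}$, where $D_{ijk}$ is a nontrivial expression in $Ric$, $\nabla\psi$, $R$ and $g$ (it contains terms such as $\frac{1}{2n-1}(\psi_jR_{ik}-\psi_iR_{jk})$ and the corresponding traces). Vanishing Cotton therefore gives $W_{ijkl}\psi_l=-D_{ijk}$, not $T\equiv 0$. This is not a removable technicality: on the model rigid shrinkers $N^k\times\mathbb{R}^{m-k}$ with $N$ Einstein and $m-k\ge 2$ one has $C\equiv 0$ while $W_{ijkl}\nabla_l f\neq 0$ wherever $\nabla f\neq 0$, so the claimed vanishing of $T$ is false, and the subsequent derivation of $\sum_{i,j}W_{ipjq}R_{ij}=0$ has no foundation. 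Fortunately the ingredient is unnecessary: in the only term where you deploy it, $\sum_{i,j,p,q}\int_MR_{ipjq}R_{ij}\psi_pe^{-\psi}(\phi^2)_q$, the gradient is already contracted against the curvature, so $\sum_p\psi_pR_{ipjq}=-(\mathrm{div}\,Rm)_{ijq}$ and your first ingredient bounds the integrand pointwise by $C(|Ric|^2+1)|\nabla\psi|\,|\nabla\phi^2|$. This one-line substitution is exactly the paper's estimate (\ref{eqn-SGRS13}), and with it your argument closes; as written, though, the proposal has a gap at this step.
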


\begin{proof}

For any $X, Y, Z\in \Gamma(D)$,
\begin{equation*}\aligned
(\n_X {\rm Hess \psi})(Y, Z) =& \n_X ({\rm Hess}\psi (Y, Z)) - {\rm Hess}\psi (\n_X Y, Z) - {\rm Hess}\psi (Y, \n_X Z), \\
(\n_Y {\rm Hess \psi})(X, Z) =& \n_Y ({\rm Hess}\psi (X, Z)) - {\rm Hess}\psi (\n_Y X, Z) - {\rm Hess}\psi (X, \n_Y Z).
\endaligned
\end{equation*}
Thus we obtain
\begin{equation}\label{eqn-SGRS1}\aligned
&(\n_X {\rm Hess \psi})(Y, Z) - (\n_Y {\rm Hess \psi})(X, Z) \\
=& \n_X ({\rm Hess}\psi (Y, Z)) - \n_Y ({\rm Hess}\psi (X, Z)) + {\rm Hess}\psi (\n_Y X -\n_X Y, Z) \\
& + {\rm Hess}\psi (X, \n_Y Z)- {\rm Hess}\psi (Y, \n_X Z).
\endaligned
\end{equation}
Notice that
\begin{equation}\label{eqn-SGRS2}\aligned
 \n_X ({\rm Hess}\psi (Y, Z)) =& \n_X \la \n_Y \n \psi, Z \ra = \la \n_X\n_Y \n \psi, Z \ra + \la \n_Y \n \psi, \n_X Z \ra, \\
  \n_Y ({\rm Hess}\psi (X, Z)) =& \n_Y \la \n_X \n \psi, Z \ra = \la \n_Y\n_X \n \psi, Z \ra + \la \n_X \n \psi, \n_Y Z \ra,
\endaligned
\end{equation}
and 
\begin{equation}\label{eqn-SGRS3}\aligned
{\rm Hess}\psi (X, \n_Y Z)- {\rm Hess}\psi (Y, \n_X Z) = \la \n_X \n \psi, \n_Y Z \ra - \la \n_Y \n \psi, \n_X Z \ra.
\endaligned
\end{equation}
Substituting (\ref{eqn-SGRS2}) and (\ref{eqn-SGRS3}) into (\ref{eqn-SGRS1}),
\begin{equation}\label{eqn-SGRS4}\aligned
(\n_X {\rm Hess \psi})(Y, Z) - (\n_Y {\rm Hess \psi})(X, Z) 
= \la R(X, Y)\n \psi, Z \ra. 
\endaligned
\end{equation}
If the Weyl tensor is harmonic, then the Schouten tensor 
\[
S= Ric - \frac{R}{4n}g
\]
is a Codazzi tensor. It yields
\[
(\n_X S)(Y, Z) = (\n_Y S) (X, Z).
\]
Note that
\[
S= Ric - \frac{R}{4n}g= Ric^T -2g -  \frac{R}{4n}g.
\]
Thus we derive
\[
\n_X S= \n_X Ric^T -  \frac{\n_X R}{4n}g.
\]
It follows that
\begin{equation}\label{eqn-SGRS5}\aligned
(\n_X  Ric^T)(Y, Z) - (\n_Y  Ric^T)(X, Z) = \frac{\n_X R}{4n}g(Y, Z) - \frac{\n_Y R}{4n}g(X, Z). 
\endaligned
\end{equation}
From the Sasakian gradient Ricci soliton equation, we get
\[
{Ric}^T + {\rm Hess}\psi = (2n+2)g^T.
\]
By (\ref{eqn-SGRS4}) and (\ref{eqn-SGRS5}) and the above equality, we have
 \begin{equation}\label{eqn-SGRS6}\aligned
&(2n+2)(\n_X g^T)(Y, Z) - (2n+2)(\n_Y g^T)(X, Z) \\
=&  \frac{\n_X R}{4n}g(Y, Z) - \frac{\n_Y R}{4n}g(X, Z)  + \la R(X, Y)\n \psi, Z \ra. 
\endaligned
\end{equation}
Direct computation gives
\[
\n_X g^T = \n_X g - \n_X (\eta\otimes \eta) = 0.
\]
Hence (\ref{eqn-SGRS6}) becomes 
\begin{equation}\label{eqn-SGRS7}\aligned
R(X, Y, Z, \n \psi) =  \la R(X, Y)\n \psi, Z \ra = \frac{\n_Y R}{4n}g(X, Z) -   \frac{\n_X R}{4n}g(Y, Z).
\endaligned
\end{equation}
Combining  (\ref{eqn-SR15}) with (\ref{eqn-SGRS7}), 
\begin{equation}\label{eqn-SGRS8}\aligned
& R(X, Y, Z, \n \psi)  \\
= &\frac{ Ric(Y, \n \psi)- g(Y, \n \psi)}{2n}g(X, Z)- \frac{ Ric(X, \n \psi)- g(X, \n \psi)}{2n}g(Y, Z).
\endaligned
\end{equation}
Taking $Z = \n\psi$, we obtain
\[
 Ric(Y, \n\psi) g(X, \n \psi) =  Ric(X, \n\psi) g(Y, \n \psi).
\]
If we consider $Y \bot \n \psi$, then by (\ref{eqn-SR9}), for every $X \in \Gamma(D)$, 
\[
0= Ric(Y, \n \psi)g(X, \n \psi) = {\rm Hess}(\psi)(Y, \n \psi)g(X, \n \psi). 
\]
Thus $\n\psi$ is an eigenvalue of $ Ric$ and ${\rm Hess}(\psi)$.

From (\ref{eqn-SGRS11}), (\ref{eqn-SGRS9}), (\ref{eqn-SGRS8}) and Theorem \ref{thm-SGR1}, we derive
\begin{equation*}\label{eqn-SGRS10}\aligned
\int_M |\div Rm|^2 e^{-\psi} \leq & C \int_M | Ric |^2 |\n \psi|^2 e^{-\psi} + C\int_M |\n\psi|^2 e^{-\psi} \\
\leq & C \int_M | Ric |^2  e^{-\mu\psi} + C\int_M  \psi e^{-\psi} \\
\leq & C+ C\int_M   e^{-\mu\psi} < \infty,
\endaligned
\end{equation*}
where $\mu \in (0, 1)$ is a constant. 
Moreover, we obtain as $r \to \infty$
\begin{equation}\label{eqn-SGRS13}\aligned
& \sum_{i,j,p,q=1}^{2n} \int_M R_{ipjq}R_{ij} \psi_p e^{-\psi} (\phi^2)_q \\
\leq & \frac{c}{r} \left( \int_M |\div Rm|^2 e^{-\psi} + \int_M | Ric |^2 e^{-\psi} \right) \leq \frac{C}{r} \to 0.
\endaligned
\end{equation}
Then combining (\ref{eqn-SRS13}), (\ref{eqn-SRS19}) with (\ref{eqn-SGRS13}), we have
\[
\int_M |{\n  Ric}|^2 e^{-\psi} = \int_M |\div Rm|^2 e^{-\psi} < \infty.
\]
\end{proof}

\noindent{\bf Proof of Theorem \ref{thm-main}} \quad
Let $\{E_1, \cdot\cdot\cdot, E_{2n}\} \subset \Gamma(D)$ be the eigenvectors of $ Ric$ with $\la E_i, E_j \ra = \d_{ij}$ and $E_{2n} = \frac{\n \psi}{|\n \psi|}$. 
Then employing (\ref{eqn-lem3}) and (\ref{eqn-SGRS8}), we get
\begin{equation}\label{eqn-SGRS14}\aligned
|\div Rm|^2 = & \sum_{j,k, l=1}^{2n} |(\div Rm)(E_j, E_k, E_l)|^2 = \sum_{j,k, l=1}^{2n}|R(\n\psi, E_j, E_k, E_l)|^2 \\
=& \sum_{k,l=1}^{2n} |R(\n\psi, E_k, E_k, E_l)|^2 + \sum_{k, l=1}^{2n} |R(\n\psi, E_l, E_k, E_l)|^2 \\
&+ \sum_{k, l=1}^{2n} \sum_{j\neq k, j\neq l}  |R(\n\psi, E_j, E_k, E_l)|^2 \\
=& 2  \sum_{k, l=1}^{2n} |R(\n\psi, E_k, E_k, E_l)|^2 \\
=& \frac{1}{n} \sum_{l=1}^{2n} |  Ric(E_l, \n\psi) - g(E_l, \n\psi) |^2 \\
=& \frac{1}{4n} |\n R|^2.
\endaligned
\end{equation}
Applying the Schwarz inequality and (\ref{eqn-SRS5}), 
\begin{equation}\label{eqn-SGRS15}\aligned
|\n Ric|^2 =& \sum_{i, j, k=1}^{2n} (R_{ij, k})^2 \geq \sum_{i, k=1}^{2n} (R_{ii, k})^2 \\
\geq & \frac{1}{2n}\sum_{k=1}^{2n} \left(\sum_{i=1}^{2n}R_{ii, k}\right)^2 = \frac{1}{2n}\sum_{k=1}^{2n}\left[(R-2n)_{, k}\right]^2 \\
=& \frac{1}{2n} |\n R|^2.
\endaligned
\end{equation}
From (\ref{eqn-SR20}),  (\ref{eqn-SGRS14}) and (\ref{eqn-SGRS15}), we get the inequality 
\[
\frac{1}{4n} \int_M |\n R|^2 e^{-\psi} \geq \frac{1}{2n} \int_M |\n R|^2 e^{-\psi}.
\]
This forces that $R$ is constant. 
Therefore by (\ref{eqn-SR15}), we derive
\begin{equation}\label{eqn-SGRS16}\aligned
 Ric(\n \psi, \n \psi) - |\n \psi|^2 = \frac{1}{2}\la \n R, \n \psi \ra = 0.
\endaligned
\end{equation}
Then from (\ref{eqn-SGRS8}) and (\ref{eqn-SGRS16}), we get that the transversely radial curvature
\begin{equation*}\label{eqn-SGRS17}\aligned
\kappa^T_{rad} =& \sum_{i=1}^{2n}R(E_i, \n \psi, E_i, \n \psi) \\
=& \sum_{i=1}^{2n-1}\frac{1}{2n}\left[  Ric(\n \psi, \n \psi) - |\n \psi|^2 \right]g(E_i, E_i) \\
=& \frac{2n-1}{2n}\left[  Ric(\n \psi, \n \psi) - |\n \psi|^2 \right] = 0.
\endaligned
\end{equation*}
Then by Theorem 1 in \cite{CLL25}, $(M,g,\psi )$ is transversely rigid.  Therefore $M$ is Sasaki-Einstein and compact shrinking Ricci soliton. Then by Theorem 2.5 in \cite{MS13}, $M$ is a finite quotient of $\mathbb{S}^{2n+1}$.
\qed

\vskip24pt

\noindent{\bf Acknowledgements} SC is partially supported by Startup Foundation for Advanced Talents of  the Shanghai Institute for Mathematics and Interdisciplinary Sciences (No.2302-SRFP-2024-0049). 
 HQ is partially supported by NSFC (No.12471050) and Hubei Provincial Natural Science Foundation of China (No.2024AFB746).

\vskip24pt

\end{document}